\documentclass[aihp]{imsart}

\RequirePackage{amsthm,amsmath,amsfonts,amssymb}
\RequirePackage[numbers]{natbib}
\RequirePackage[colorlinks,citecolor=blue,urlcolor=blue]{hyperref}
\RequirePackage{graphicx}

\startlocaldefs
\theoremstyle{plain}

\newtheorem{theorem}{Theorem}[section]
\newtheorem{lemma}[theorem]{Lemma}
\newtheorem{assumption}[theorem]{Assumption}
\newtheorem{proposition}[theorem]{Proposition}
\newtheorem{corollary}[theorem]{Corollary}
\theoremstyle{remark}
\newtheorem{remark}[theorem]{Remark}

\theoremstyle{remark}
\newtheorem{definition}[theorem]{Definition}


\usepackage{amssymb, amsmath,mathtools}
\usepackage{mathrsfs}
\usepackage{amscd}
\usepackage{verbatim}
\usepackage{stmaryrd}
\usepackage[dvipsnames]{xcolor}

\usepackage{enumerate}

\usepackage{forest}
\usepackage{tikz}
\tikzset{>=latex}

\usepackage{tabularx}
\newcolumntype{L}{>{\arraybackslash}X}
\usepackage{multirow}

\numberwithin{equation}{section}

\def\N{{\mathbb N}}

\def\R{{\mathbb R}}


\renewcommand{\P}{{\mathbb P}}
\newcommand{\F}{{\mathscr F}}

\newcommand{\g}{\gamma}

\renewcommand{\O}{\Omega}

\renewcommand{\a}{\kappa}



\newcommand{\Dom}{\mathscr{O}}

\newcommand{\I}{I}

\newcommand{\Tor}{\mathbb{T}}
\newcommand{\T}{\mathbb{T}}

\newcommand{\A}{{\mathcal A}}
\renewcommand{\b}{{\mathcal B}}

\newcommand{\calL}{{\mathscr L}}
\newcommand{\Sc}{{\mathcal S}}


\newcommand{\hz}{\prescript{}{0}{H}}

\newcommand{\Wz}{\prescript{}{0}{W}}





\newcommand{\Sol}{\mathscr{R}}


%
\DeclareMathOperator*{\esssup}{\textup{ess\,sup}}

\renewcommand{\AA}{\mathcal{T}}

\newcommand{\Do}{\mathrm{D}}

\newcommand{\tT}{T^{*}}

\newcommand{\wt}{\widetilde}

\usepackage{stmaryrd}


\newcommand{\MRtas}{\mathcal{SMR}_{p,\a}^{\bullet}(s,T)}
\newcommand{\MRtazero}{\mathcal{SMR}_{p,0}^{\bullet}(s,T)}

\newcommand{\MRtt}{\mathcal{SMR}_{p}^{\bullet}(t,T)}

\newcommand{\MRta}{\mathcal{SMR}_{p,\a}^{\bullet}(T)}

\newcommand{\MRtasz}{\mathcal{SMR}_{p,\a}(s,T)}

\newcommand{\MRttwoszero}{\mathcal{SMR}_{2,0}^{\bullet}(s,T)}

\newcommand{\MRttwosz}{\mathcal{SMR}_{2,0}(s,T)}

\newcommand{\MRtasonesequencezer}{\mathcal{SMR}_{p,\a}(s_{1})}
\newcommand{\MRtasjsequencez}{\mathcal{SMR}_{p}(s_j,s_{j+1})}
\newcommand{\MRtasonesequencez}{\mathcal{SMR}_{p,\a}(s,s_{1})}
%

\newcommand{\Tr}{\mathrm{Tr}}

\newcommand{\Xap}{X^{\mathrm{Tr}}_{\a,p}}

\newcommand{\Xp}{X^{\mathrm{Tr}}_{p}}

\newcommand{\Xzp}{X^{\mathrm{Tr}}_{0,p}}


\newcommand{\one}{{{\bf 1}}}
\newcommand{\embed}{\hookrightarrow}

\newcommand{\B}{B}

\newcommand{\supp}{\mathrm{supp}\,}

\renewcommand{\l}{\langle}
\renewcommand{\r}{\rangle}
\newcommand{\deter}{\mathrm{det}}

\newcommand{\stoc}{\mathrm{sto}}
\newcommand{\Fou}{\mathcal{F}}
\newcommand{\reg}{\delta}

\newcommand{\norm}[1]{{\left\vert\kern-0.25ex\left\vert\kern-0.25ex\left\vert #1
    \right\vert\kern-0.25ex\right\vert\kern-0.25ex\right\vert}}






\newcommand{\Progress}{\mathscr{P}}

\newcommand{\wh}{\widehat}







\newcommand{\btwod}{b}

\newcommand{\am}{a}
\newcommand{\bm}{b}

\newcommand{\modulus}{\Theta}
\newcommand{\Borel}{\mathscr{B}}


\newcommand{\e}{\mathrm{E}}



\def\XXint#1#2#3{{\setbox0=\hbox{$#1{#2#3}{\int}$ }
\vcenter{\hbox{$#2#3$ }}\kern-.6\wd0}}

\newcommand{\Set}{\mathcal{S}}

\newcommand{\dd}{\mathrm{d}}

\allowdisplaybreaks


\endlocaldefs

\begin{document}

\begin{frontmatter}
\title{Stochastic maximal $L^p(L^q)$-regularity for second order systems with periodic boundary conditions}
\runtitle{Stochastic maximal $L^p(L^q)$-regularity}

\begin{aug}
\author[A,C]{\inits{A.}\fnms{Antonio}~\snm{Agresti}\ead[label=e1]{antonio.agresti92@gmail.com}\orcid{0000-0002-9573-2962}}
\and
\author[B]{\inits{M.C.}\fnms{Mark}~\snm{Veraar}\ead[label=e2]{m.c.veraar@tudelft.nl}\orcid{0000-0003-3167-7471}}
\address[A]{
Department of Mathematics,
	Technische Universit\"{a}t Kaiserslautern, Kaiserslautern, Germany
}

\address[C]{
current address: Institute of Science and Technology Austria (ISTA), Klosterneuburg, Austria
\printead[presep={,\ }]{e1}}

\address[B]{Delft Institute of Applied Mathematics, Delft University of Technology, Delft, The
Netherlands\printead[presep={,\ }]{e2}}
\end{aug}

\begin{abstract}
In this paper we consider an SPDE where the leading term is a second order operator with periodic boundary conditions, coefficients which are measurable in $(t,\omega)$, and H\"older continuous in space. Assuming stochastic parabolicity conditions, we prove $L^p((0,T)\times \Omega, t^{\kappa} \mathrm{d} t;H^{\sigma,q}(\T^d))$-estimates. The main novelty is that we do not require $p=q$. Moreover, we allow arbitrary $\sigma\in \R$ and weights in time. Such mixed regularity estimates play a crucial role in applications to nonlinear SPDEs which is clear from our previous work. To prove our main results we develop a general perturbation theory for SPDEs. Moreover, we prove a new result on pointwise multiplication in spaces with fractional  smoothness.
\end{abstract}

\begin{abstract}[language=french]
Dans cet article, nous considérons une EDPS où le terme dominant est un opérateur du second ordre avec des conditions aux limites périodiques, des coefficients mesurables en $(t,\omega)$ et de regularité H\"olderienne en espace. En supposant des conditions de parabolicité stochastique, nous prouvons des estimations de type $ L^p((0,T)\times \Omega, t^{\kappa}  \mathrm{d} t;H^{\sigma,q}(\T^d))$. La principale nouveauté est que nous n'avons pas besoin de $p=q $. De plus, nous autorisons $\sigma\in \R$ à \^etre arbitraire et des poids en temps. De telles estimations de régularité mixtes jouent un rôle crucial dans les applications aux EDPS non linéaires, ce qui ressort clairement de nos travaux précédents. Pour prouver nos principaux résultats, nous développons une méthode générale perturbative pour les EDPS. De plus, nous prouvons un nouveau résultat sur la multiplication ponctuelle dans des espaces à régularité fractionnaire.
\end{abstract}

\begin{keyword}[class=MSC]
\kwd[Primary ]{60H15}
\kwd[; secondary ]{60H15}
\kwd{35B65}
\kwd{42B37}
\kwd{46F10}
\kwd{47D06}
\end{keyword}

\begin{keyword}
\kwd{stochastic maximal regularity}
\kwd{stochastic evolution equations}
\kwd{second order operators}
\kwd{periodic boundary conditions}
\kwd{perturbation theory}
\kwd{pointwise multipliers}
\end{keyword}

\end{frontmatter}

\section{Introduction}
In this paper we consider second order systems on the torus $\T^d$:
\begin{equation}
\label{eq:parabolic_problem_intro}
\begin{cases}
\dd u +\A \, u\, \dd t= f\, \dd t + \sum_{n\geq 1} (\b_n u+g_n)\, \dd w^n_t,&\text{ on }\Tor^d,\\
u(0)=u_0, &\text{ on }\Tor^d.
\end{cases}
\end{equation}
Here $\A$ is a second order operator, $\b_n$ are first order operators, and we suppose  that a stochastic parabolicity condition holds (see e.g.\ \eqref{eq:stochparintro} below). The processes $(w^n)_{n\geq 1}$ are independent standard Brownian motions on a filtered probability space.
Our goal is to prove optimal regularity estimates for the solution to \eqref{eq:parabolic_problem_intro} in a weighted $L^p$-setting in time and in $H^{\sigma,q}$ in space, where $\sigma\in \R$ and $p,q\in [2, \infty)$ are arbitrary. Our motivation for considering optimal regularity estimates for linear problems such as \eqref{eq:parabolic_problem_intro} comes from the applications to nonlinear SPDEs which were recently obtained in \cite{AV19_QSEE_1,AV19_QSEE_2}.
In particular, from these works it is clear that having full flexibility in $\sigma,p,q$ is important when considering critical spaces for nonlinear SPDEs (also see \cite{CriticalQuasilinear} for the deterministic case).

Unfortunately, in the stochastic case there are very few results available in the case $p\neq q$. The aim of this paper is to obtain sufficient conditions for optimal regularity results with as much flexibility as possible.
In the case the coefficients of $\A$ and $\b$ are only dependent on $x$ (space), and $b = 0$ (or small), $L^p(L^q)$-theory can be deduced from \cite{MaximalLpregularity}, where a sufficient conditions for stochastic maximal regularity is given in terms of the $H^\infty$-functional calculus of the leading differential operator. Moreover, in \cite{LoVer} an extrapolation result was obtained which can be used to go from an $L^q(L^q)$-setting to an $L^p(L^q)$-setting.

Unfortunately, both of the latter results cannot directly be used if the coefficients are $(t,\omega,x)$-dependent and $\b$ is not small.
However, if $p=q$, then many sufficient conditions are available in the literature (see \cite{KimLeesystems,Kry,VP18} and references therein). Moreover, in \cite{VP18} the case $p\neq q$ is covered if the coefficients are only $(t,\omega)$-dependent.
The difficulties in case $p\neq q$ are due to the fact that Fubini arguments fail, and this has consequences for localization arguments. In \cite{GV17} this problem was circumvented for deterministic PDEs with continuous coefficient by using the weighted extrapolation theory of Rubio de Francia. This was further improved to VMO-coefficients in space in \cite{DK18}. At the moment the latter approach seems not available for SPDEs.

In the current paper we give suitable conditions under which a localization argument can be done in the $p\neq q$ setting. Here we only assume regularity conditions on the coefficients in the space variable. In $(t,\omega)$ the coefficients are only assumed to be progressively measurable. Our starting point is the case of space-independent coefficients previously considered in \cite[Theorem 5.3]{VP18}. In the case of $(t,\omega,x)$-dependent coefficient we obtain stochastic maximal $L^p$-regularity using a new abstract perturbation result for stochastic maximal $L^p$-regularity, and new results on pointwise multiplication in spaces of fractional smoothness, we are able to apply the general framework of \cite{VP18} combined with some results from \cite{AV19_QSEE_1,AV19_QSEE_2}, to do a localization argument to build in $x$-dependence.

In the preprint \cite{AV20_NS} we extend the method of the current paper to obtain a similar result for the stochastic Stokes equations. Using \cite{AV19_QSEE_1} we obtain local well-posedness (local for $d\geq 3$ and global for $d=2$) for the stochastic Navier-Stokes equations with a transport noise term in an $L^p(L^q)$-setting. Via \cite{AV19_QSEE_2} we then prove new regularity properties of the solution. The results of the current paper will be used in \cite{AV_reaction} to obtain new well-posedness and regularity results for reaction-diffusion equations. Here the $L^p(L^q)$-setting will play a crucial role.

To conclude this introduction we state a special case of our main result in the case of scalar equations (Theorem \ref{t:parabolic_problems} for $m=1$). Below $\Progress$ and $\Borel$ denote the progressive and the Borel $\sigma$-algebra, respectively.
\begin{assumption}\label{ass:intro}
Let $\sigma\in\R$ and $T\in (0,\infty)$. 
For each $i,j\in \{1,\dots,d\}$ and $n\geq 1$, $\am^{i,j}:[0,T]\times \O\times \Tor^d\to \R$ and $\bm^{j}_{n}:[0,T]\times \O\times \Tor^d\to \R$ are $\Progress\otimes \Borel(\Tor^d)$-measurable, and there exist $C_{\am,\bm}>0$, $\alpha>|1+\sigma|$ such that a.s.\
\begin{align*}
\|\am^{i,j}(t,\cdot)\|_{C^{\alpha}(\Tor^d)}+\|(\bm^{j}_{n}(t,\cdot))_{n\geq 1}\|_{C^{\alpha}(\Tor^d;\ell^2)}&\leq C_{\am,\bm}, \  t\in [0,T],  \ i,j\in \{1,\dots,d\}.
\end{align*}
There exists $\vartheta>0$ such that, a.s.\ for all $t\in [0,T]$, $x\in \Tor^d$, $\eta\in \R^m$ and $\xi\in \R^d$,
\begin{align}\label{eq:stochparintro}
\sum_{i,j=1}^d \Big(\am^{i,j}(t,x)-\frac{1}{2}\sum_{n\geq 1} \bm^j_{n}(t,x)\bm^i_{n}(t,x)\Big)  \xi_i \xi_j\geq \vartheta |\xi|^2.
\end{align}
For $n\geq 1$ and $t\in [0,T]$, set
\begin{equation}
\label{eq:def_AB_parabolicintro}
\begin{aligned}
\A(t) u:=-\sum_{i,j=1}^d \partial_i (\am^{i,j}(t,\cdot)\partial_ju) 
\qquad \text{and} \qquad
\b_n(t) u:=\sum_{j=1}^d \bm^{j}_{n}(t,\cdot)\partial_ju
\end{aligned}
\end{equation}
\end{assumption}

The following follows from Theorem \ref{t:parabolic_problems}, where also systems and mixed space-time regularity are considered.

\begin{theorem}
\label{t:parabolic_problems-intro}
Suppose that Assumption \ref{ass:intro} holds.
Let $p\in (2, \infty)$, $q\in [2, \infty)$ and $\kappa\in [0,p/2-1)$ (or $p=q=2$ and $\kappa=0$). Let $w_{\a}(t) = t^{\a}$. Then for each
\begin{align*}
u_0 & \in L^p_{\F_0}(\Omega;B^{2+\sigma-2(1+\kappa)/p}_{q,p}(\mathbb{T}^d))
\\ f & \in  L^p_{\Progress}((0,T)\times \Omega,w_{\a};H^{\sigma,q}(\Tor^d)) \ \ \text{and} \ \
g\in L^p_{\Progress}((0,T)\times \Omega,w_{\a};H^{1+\sigma,q}(\Tor^d;\ell^2)),
\end{align*}
there exists a unique strong solution $u$ to \eqref{eq:parabolic_problem_intro} and
\begin{align*}
\|u\|_{L^p((0,T)\times \Omega,w_{\a};H^{2+\sigma,q}(\Tor^d)))} &\lesssim
\|u_0\|_{L^p(\Omega;B^{2+\sigma-2(1+\kappa)/p}_{q,p}(\mathbb{T}^d))} + \|f\|_{L^p((0,T)\times \Omega,w_{\a};H^{\sigma,q}(\Tor^d))} \\ & \qquad +\|g\|_{L^p((0,T)\times \Omega,w_{\a};H^{1+\sigma,q}(\Tor^d;\ell^2))}
\end{align*}
with implicit constant independent of $(u_0,f,g)$.
\end{theorem}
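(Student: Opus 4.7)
The plan is to prove Theorem \ref{t:parabolic_problems-intro} by a freezing-of-coefficients / partition-of-unity argument, combined with the abstract perturbation machinery for $\mathcal{SMR}$-classes developed earlier in the paper and the new pointwise multiplier results in spaces of fractional smoothness. The starting point is the case when the coefficients $\am^{i,j}$ and $\bm^{j}_n$ depend only on $(t,\omega)$, which is covered by \cite[Theorem 5.3]{VP18}: that result gives stochastic maximal $L^p(L^q, w_\a)$-regularity for the model problem obtained by freezing the coefficients at an arbitrary point $x_0 \in \T^d$, and crucially it works for all $\sigma \in \R$ and all $p,q \in [2,\infty)$ in the range of the theorem.

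First I would pick a finite open cover $\{B_k\}_{k=1}^N$ of $\T^d$ by balls of radius $\delta$ (small, to be fixed), a smooth partition of unity $\{\chi_k\}$ subordinate to $\{B_k\}$, and centers $x_k \in B_k$. For each $k$, define frozen operators $\A_k(t)$ and $\b_{n,k}(t)$ by replacing $\am^{i,j}(t,\cdot)$, $\bm^{j}_n(t,\cdot)$ by their values at $x_k$. Multiplying \eqref{eq:parabolic_problem_intro} by $\chi_k$ and commuting $\chi_k$ past the differential operators yields an equation for $u_k := \chi_k u$ of the same shape, with leading operators $\A_k, \b_{n,k}$ and forcing that splits into (i) the localized data $\chi_k f$ and $\chi_k g_n$, (ii) commutators $[\chi_k, \A]$ and $[\chi_k, \b_n]$, involving only derivatives of $u$ of order $\leq 1$ multiplied by smooth bounded functions, and (iii) freezing errors of the form $(\am^{i,j}(t,\cdot)-\am^{i,j}(t,x_k))\partial_i\partial_j u$ and the analogous first-order term for $\bm$.

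The key observation is that each of these error terms can be absorbed as a perturbation. The commutator contributions are of strictly lower order than the leading operators and are absorbed via interpolation, while the freezing errors are small because the H\"older hypothesis yields $\|\am^{i,j}(t,\cdot)-\am^{i,j}(t,x_k)\|_{L^\infty(B_k)} \lesssim C_{\am,\bm}\,\delta^\alpha$, and analogously for $\bm$ in $\ell^2$-norm. This is precisely where the new pointwise multiplier theorem in $H^{\sigma,q}$ enters: since $\sigma$ is an arbitrary real number, one needs boundedness of multiplication by a $C^\alpha$-function as an operator on $H^{\sigma,q}(\T^d)$, with explicit dependence on the local $C^\alpha$-norm of the multiplier. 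The assumption $\alpha > |1+\sigma|$ is exactly what makes this multiplication continuous at both orders $\sigma$ and $\sigma+1$ appearing above, with the operator norm of the freezing error tending to $0$ as $\delta \to 0$.

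Combining these ingredients, the abstract perturbation theorem for stochastic maximal regularity proved earlier in the paper applies to each localized problem on a short time interval $[0,T_0]$: for $\delta$ sufficiently small the perturbed operators $\A, (\b_n)_n$ inherit the $\mathcal{SMR}^{\bullet}_{p,\a}(0,T_0)$ property from the frozen ones, with estimates uniform in $k$. The global estimate on $[0,T]$ is recovered by a standard concatenation in time, using that the weight $w_\a$ and the Besov trace space $B^{2+\sigma-2(1+\kappa)/p}_{q,p}$ are preserved across time steps by the trace/embedding theory developed in \cite{AV19_QSEE_1,AV19_QSEE_2}. Finally $u=\sum_k \chi_k u$ is reconstructed and the local bounds are summed, invoking the pointwise multiplier result in the reverse direction to relate $\|u\|_{H^{\sigma,q}}$ to the $\|\chi_k u\|_{H^{\sigma,q}}$. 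Uniqueness follows from linearity and the a priori estimate applied to the difference of two solutions. The main obstacle in this program is the pointwise multiplication step: without Fubini (since $p\neq q$) one cannot simply localize in $x$, and the H\"older-symbol multiplier bounds on $H^{\sigma,q}$ for arbitrary $\sigma\in\R$ must carry just enough smallness to close the perturbation argument, which is why the sharp requirement $\alpha > |1+\sigma|$ appears in Assumption \ref{ass:intro}.
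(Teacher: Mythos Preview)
Your overall strategy is the same as the paper's: freeze coefficients, invoke \cite[Theorem 5.3]{VP18} for the $x$-independent model, localize with a partition of unity, control commutators as lower-order terms, use the pointwise multiplier results to handle the freezing error, and concatenate in time. However, the way you describe the perturbation step does not quite work as written, and the paper inserts a device you omit.

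You say that ``for $\delta$ sufficiently small the perturbed operators $\A,(\b_n)_n$ inherit the $\mathcal{SMR}^\bullet_{p,\a}(0,T_0)$ property from the frozen ones'' via the abstract perturbation theorem. But Theorem~\ref{t:pertubation} requires the perturbation $A_0=\A-\A_k$ to satisfy $\|A_0 v\|_{X_0}\le C_A\|v\|_{X_1}+L_A\|v\|_{X_0}$ with \emph{small} $C_A$ for \emph{all} $v\in X_1$, not only for $v$ supported in $B_k$. The multiplier $a^{i,j}(t,\cdot)-a^{i,j}(t,x_k)$ is small in $L^\infty(B_k)$ but not in $L^\infty(\T^d)$, and the multiplication estimates of Proposition~\ref{prop:multiplication_negative} involve global norms of the multiplier. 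The paper handles this by introducing an extension operator $\e_{y,r}^{\T^d}$ (Lemma~\ref{l:extension_operators}) and defining intermediate operators $\A_{y,r}^{\e},\b_{y,r}^{\e}$ whose coefficients agree with the original ones on $B(y,r)$ and are \emph{globally} close to the frozen values in $L^\infty$ (though not in $C^\alpha$, which is why the two-term structure of Proposition~\ref{prop:multiplication_negative} is essential). The perturbation theorem is then applied to pass from $(\A_y,\b_y)$ to $(\A_{y,r}^{\e},\b_{y,r}^{\e})$, not to $(\A,\b)$ directly. Only afterwards does the localization $\A u_\lambda=\A_\lambda^{\e}u_\lambda$ (valid because $\mathrm{supp}\,u_\lambda\subset B_\lambda$) yield the representation formula and a priori estimate for $u$ itself; existence for $(\A,\b)$ then comes from the method of continuity, which you do not mention.

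A minor point: in divergence form the freezing error is $-\partial_i\big((a^{i,j}-a^{i,j}(x_k))\partial_j u_k\big)$, not $(a^{i,j}-a^{i,j}(x_k))\partial_i\partial_j u$; this matters for the multiplier estimate, which is applied at the level $H^{1+\sigma,q}$ to $(a^{i,j}-a^{i,j}(x_k))\partial_j u_k$, and explains the condition $\alpha>|1+\sigma|$ rather than $\alpha>|\sigma|$.
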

A similar result holds for $\R^d$ in case the coefficients become constant for $|x|\to \infty$ (see Remark \ref{rem:Rdcase}). Furthermore, a version of the result also holds for non-divergence type operators (see Theorem \ref{t:parabolic_problems_non}).

\section{Preliminaries}\label{s:preliminaries}

In this section we collect known facts and fix our notation. Here $X$ denotes a Banach space.

\subsection{Function spaces}
For details on vector-valued functions spaces and complex interpolation we refer to \cite{AmannII,Analysis1,MV12,Tr1}.
For an open subset $\Dom\subseteq \R^d$ and $k\in \N_0$, $C^k(\overline{\Dom};X)$ denotes the space of all $k$-times continuous differentiable functions with bounded derivatives endowed with its supremum norm. For $\alpha = k+s$ with $k\in \N_0$ and $s\in (0,1)$ define $C^{\alpha}(\overline{\Dom};X)$ to be the space of functions $f\in C^{k}(\overline{\Dom};X)$ for which the derivatives of order $\leq k$ are $s$-H\"older continuous.
For $p,q\in (1,\infty)$ and $\sigma\in \R$, we denote by $H^{\sigma,q}(\Tor^d)$ and $B^{\sigma}_{q,p}(\Tor^d)$, the Bessel potential and Besov spaces, respectively. Moreover, we set $H^{\sigma,q}(\Tor^d;\R^m):=(H^{\sigma,q}(\Tor^d))^m$ and $B^{\sigma}_{q,p}(\Tor^d;\R^{m}):=(B^{\sigma}_{q,p}(\Tor^d))^m$ for all integers $m\geq 1$.

For $p\in (1,\infty)$, $\a\in (-1,p-1)$ and $a\geq 0$, $w_{\a}^a$ denotes the shifted power weight
\begin{equation*}
w_{\a}^a(t):=|t-a|^{\a}, \quad t\in \R,\quad\qquad w_{\a}:=w_{\a}^0.
\end{equation*}

For $\I=(a,b)$ where $0\leq a<b\leq \infty$, let
$L^p(\I,w_{\a}^a;X)$ be the space of all strongly measurable functions $f:\I\to X$ such that
\[\|f\|_{L^p(\I,w_{\a}^a;X)}:=\Big(\int_a^b \|f(t)\|_{X}^p w_{\a}^a(t)\,\dd t\Big)^{1/p}<\infty,\]
where we omit the weight from the above notation if $\kappa=0$. Let
$W^{1,p}(\I,w_{\a}^a;X)$ denote the subspace of $L^p(\I,w_{\a}^a;X)$ such that the weak derivative satisfies $f'\in L^p(\I,w_{\a}^a;X)$. This space is endowed with the norm:
$$
\|f\|_{W^{1,p}(\I,w_{\a}^a;X)}:=\|f\|_{L^{p}(\I,w_{\a}^a;X)}+\|f'\|_{L^{p}(\I,w_{\a}^a;X)}.
$$
Furthermore, for $\theta\in (0,1)$ let
\begin{align*}
\Wz^{1,p}(\I,w_{\a}^a;X)&= \{f\in W^{1,p}(\I,w_{\a}^a;X): f(a)=0\},
\\ H^{\theta,p}(\I,w_{\a}^a;X) & = [L^p(\I,w_{\a}^a;X),W^{1,p}(\I,w_{\a}^a;X)]_{\theta} \ \ \text{(complex interpolation)},
\\ \hz^{\theta,p}(\I,w_{\a}^a;X) &= [L^p(\I,w_{\a}^a;X_1),\Wz^{1,p}(\I,w_{\a}^a;X)]_{\theta}.
\end{align*}
Since each $f\in W^{1,p}(\I,w_{\a}^a;X)$ has a continuous version on $\overline{I}$ (see \cite[Lemma 3.1]{LV18}), the value $f(a)$ is well-defined.

For the definition of complex interpolation we refer to e.g.\ \cite{BeLo,Tr1}. Below we also employ the real interpolation functor which will be denoted by $(\cdot,\cdot)_{\theta,p}$ for $\theta\in (0,1)$ and $p\in (1,\infty)$.

\subsection{Stochastic maximal $L^p$-regularity}

Next we will introduce the main abstract setting to define stochastic maximal $L^p$-regularity.
\begin{assumption}
\label{ass:X}
Let $X_0,X_1$ be UMD Banach spaces with type 2 and $X_1\hookrightarrow X_0$ densely. Assume that one of the following two settings is satisfied
\begin{itemize}
\item $p\in (2,\infty)$ and $\a\in [0,\frac{p}{2}-1)$;
\item $p=2$, $\a=0$ and $X_0,X_1$ are Hilbert spaces.
\end{itemize}
For $\theta\in (0,1)$ and $p,\a$ as above, we set
$$
X_{\theta}:=[X_0,X_1]_{\theta},\qquad \Xap:=(X_0,X_1)_{1-\frac{1+\a}{p},p},\qquad  \Xp:=\Xzp.$$
\end{assumption}

\begin{assumption}
\label{ass:AB_boundedness}
Let $T\in (0,\infty]$ and $s\in [0,T)$. Assume that $A:[s,T]\times\Omega\to \calL(X_1,X_0)$ and $B:[s,T]\times\Omega \to \calL(X_1,\g(H,X_{1/2}))$ are strongly progressively measurable and there exists $C_{A,B}>0$ such that, a.s.\ for all $t\in [s,T]$,
$$
\|A(t,\cdot)\|_{\calL(X_1,X_0)} + \|B(t,\cdot)\|_{ \calL(X_1,\g(H,X_{1/2}))}
\leq
C_{A,B}<\infty.
$$
\end{assumption}
Stochastic maximal $L^p$-regularity is concerned with the optimal regularity estimate for the linear abstract stochastic Cauchy problem:
\begin{equation}
\label{eq:diffAB_s}
\begin{cases}
\dd u(t) +A(t)u(t)\, \dd t=f(t) \, \dd t+ (B(t)u(t)+g(t))\, \dd W_H(t),\quad t\in [s,T],\\
u(s)=u_{s}.
\end{cases}
\end{equation}
Here $W_H$ denotes a $H$-cylindrical Brownian motion (see e.g.\ \cite[Definition 2.11]{AV19_QSEE_1}) on a filtered probability space $(\O,\mathscr{A},(\F_t)_{t\geq 0},\P)$ and $H$ is a separable Hilbert space.
In order to define strong solutions to \eqref{eq:diffAB_s} let $\tau$ be a stopping time such that $s\leq \tau\leq T$ a.s.\ and let
$u_{s}\in L^0_{\F_{s}}(\O;X_0)$, $f\in L^0_{\Progress}(\O;L^1(s,\tau;X_0))$, $g\in L^0_{\Progress}(\O;L^2(s,\tau;\g(H,X_{0})))$, where $\Progress$ denotes the progressive $\sigma$-algebra. A strongly progressive measurable map $u:[s,\tau]\times\Omega\to X_1$ is said to be a \textit{strong solution} to \eqref{eq:diffAB_s} (on $[s,\tau]$) if $u\in L^0(\O;L^2(s,\tau;X_1))$ and, a.s.\ for all $t\in [s,\tau]$,
\begin{equation*}
u(t)-u_{s}+\int_{s}^t A(r)u(r) \, \dd r=\int_{s}^t
 f(r) \, \dd r+ \int_s^t\one_{(s,\tau)}(B(r)u(r)+g(r))\, \dd W_H(r).
\end{equation*}
In the above $\gamma(H,X_{0})$ denotes the set of $\gamma$-radonifying operators from $H$ to $X_0$ (see e.g.\ \cite[Chapter 9]{Analysis2}), and $[0,\tau]\times \Omega:=\{(t,\omega)\in [0,T]\times \Omega\,:\,0\leq \tau(\omega)\leq T\}$. Below we employ a similar notation for $[0,\tau)\times \Omega$, $(0,\tau)\times \Omega$ etc.

As in \cite{AV19_QSEE_1}, we define (weighted) stochastic maximal $L^p$-regularity.

\begin{definition}[Stochastic maximal $L^p$-regularity]
\label{def:SMRgeneralized}
Let Assumptions \ref{ass:X} and \ref{ass:AB_boundedness} be satisfied. We write $(A,B)\in \MRtasz$ if for every
\begin{equation*}
f\in  L^p_{\Progress}( (s,T)\times\Omega,w_{\a}^{s};X_0),
\quad\text{ and }\quad
g\in L^p_{\Progress}((s,T)\times\Omega,w_{\a}^s;\g(H,X_{1/2}))
\end{equation*}
there exists a strong solution $u$ to \eqref{eq:diffAB_s} with $u_s=0$ such that $u\in L^p_{\Progress}((s,T)\times\Omega, w_{\a};X_{1})$, and moreover for all stopping time $\tau$, such that $s\leq \tau \leq T$ a.s., all strong solutions $u\in L^p_{\Progress}((s,\tau)\times \O,w_{\a}^s;X_1)$ to \eqref{eq:diffAB_s} with $u_s=0$ satisfy the estimate
\begin{equation*}
\begin{aligned}
\|u\|_{L^p((s,\tau)\times\Omega,w_{\a};X_1)}
&\lesssim\|f\|_{L^p((s,\tau)\times\Omega,w_{\a}^{s};X_0)}
+\|g\|_{L^p((s,\tau)\times\Omega,w_{\a}^{s};\g(H,X_{1/2}))},
\end{aligned}
\end{equation*}
where the implicit constant is independent of $(f,g,\tau)$. In addition:
\begin{enumerate}[{\rm(1)}]
\item\label{it:SMR_regularity_0} If $p\in (2,\infty)$ and $\a\in [0,\frac{p}{2}-1)$, then we say $(A,B)\in \MRtas$ if $(A,B)\in \MRtasz$ and, for each $\theta\in [0,\frac{1}{2})\setminus\{\frac{1+\a}{p}\}$,
\begin{equation*}
\begin{aligned}
\|u\|_{L^p(\O;\hz^{\theta,p}(s,T,w_{\a}^{s};X_{1-\theta}))}
&\lesssim\|f\|_{L^p((s,T)\times\Omega,w_{\a}^{s};X_0)}+\|g\|_{L^p((s,T)\times\Omega,w_{\a}^{s};\g(H,X_{1/2}))},
\end{aligned}
\end{equation*}
where the implicit constant is independent of $(f,g,\tau)$.
\item If $p=2$ and $\a=0$, then we say $(A,B)\in \MRttwoszero$ if $(A,B)\in \MRttwosz$ and there exists $C>0$ such that
\begin{align*}
\|u\|_{ L^2(\O;C([s,T];X_{1/2}))}
\lesssim \|f\|_{L^2((s,T)\times\Omega ;X_0)}+\|g\|_{L^2((s,T)\times\Omega;\g(H,X_{1/2}))},
\end{align*}
where the implicit constant is independent of $(f,g,\tau)$.
\end{enumerate}
We write $\mathcal{SMR}_p^{\bullet}(s,T):=\MRtazero$ and $A\in \MRtas$ if $(A,0)\in \MRtas$ and similarly if ``$\bullet$'' is omitted.
\end{definition}
If one of the stochastic maximal $L^p$-regularity estimates of Definition \ref{def:SMRgeneralized} holds, then one can also consider \eqref{eq:diffAB_s} with nonzero initial data $u_s\in L^p_{\F_0}(\Omega;\Xap)$ (as defined in Assumption \ref{ass:X}). The above estimates then hold if $\|u_s\|_{L^p(\Omega;\Xap)}$ is added on the right-hand side (see \cite[Proposition 3.10]{AV19_QSEE_1}).

Examples of operators with stochastic maximal regularity appear in many places in the literature, and we refer to the examples in \cite[Subsection 3.2]{AV19_QSEE_1} and references therein. In Section \ref{sec:SMRsecond} we present a general sufficient conditions for stochastic maximal regularity for second order operators on $\T^d$ and $\R^d$.

Following \cite{AV19_QSEE_1,AV19_QSEE_2}, we introduce the \textit{solution operator} $\Sol_{s,(A,B)}$ associated to the couple $(A,B)\in \MRtas$ defined as
\begin{equation}
\label{eq:solution_operator_definition}
u:=\Sol_{s,(A,B)}(0,f,g)
\end{equation}
where $u$ is the unique solution to \eqref{eq:diffAB_s}. Then $\Sol_{s,(A,B)}(0,\cdot,\cdot)$ defines a mapping
$$
L^p((s,T)\times\Omega,w_{\a}^s;X_0)\times
L^p((s,T)\times\Omega,w_{\a}^s;\g(H,X_{1/2}))\to
L^p(\O;H^{\theta,p}(s,T,w_{\a}^s;X_{1-\theta}))
$$
provided $p>2$ and $\theta\in [0,\frac{p}{2}-1)$. Here and below, if $p=2$ and $\theta\in (0,\frac{1}{2})$, then one has to replace
$H^{\theta,p}(s,T,w_{\a}^s;X_{1-\theta})$ by $C([s,T];X_{1/2})$. For future convenience, let us define the following constants
\begin{align*}
C_{(A,B)}^{\det,\theta,p,\a}(s,T)
&:=\|\Sol_{s,(A,B)}(0,f,0)\|_{L^p((s,T)\times\Omega,w_{\a}^s;X_0)\to
L^p(\O;H^{\theta,p}(s,T,w_{\a}^s;X_{1-\theta}))}\\
C_{(A,B)}^{\stoc,\theta,p,\a}(s,T)
&:=\|\Sol_{s,(A,B)}(0,0,g)\|_{L^p((s,T)\times\Omega,w_{\a}^s;X_0)\to
L^p(\O;H^{\theta,p}(s,T,w_{\a}^s;X_{1-\theta}))}.
\end{align*}
Finally, for $\ell\in \{\deter,\stoc\}$, we set
\begin{align}\label{eq:constKellthetapa}
K^{\ell,\theta,p,\a}_{(A,B)}(s,T):=C_{(A,B)}^{\ell,\theta,p,\a}(s,T)+C_{(A,B)}^{\ell,0,p,\a}(s,T).
\end{align}

For later reference we collect some of the fundamental results on stochastic maximal $L^p$-regularity which will play a key role in the later subsections.

A sufficient condition for stochastic maximal $L^p$-regularity was obtained in \cite{MaximalLpregularity} for $\a=0$. The case $\a\in (0,\frac{p}{2}-1)$ was obtained in \cite{AV19} using a perturbation argument. For a complete weighted theory we refer to \cite{LV18}.

\begin{theorem}
\label{t:SMR_H_infinite}
Let the Assumption \ref{ass:X} be satisfied. Let $X_0$ be isomorphic to a closed subspace of an $L^q$-space for some $q\in [2,\infty)$, and let $\AA$ be a closed operator with domain $\Do(\AA)=X_1$. Assume that there exists $\lambda\in \R$ such that $\lambda
+\AA$ has a bounded $H^{\infty}$-calculus of angle $<\pi/2$. Then, for all $0\leq s<T<\infty$, $\AA\in \MRtas$.
If additionally $\AA$ is invertible, then $\sup_{T>s} K^{\ell, \theta,p,\a}_{\AA}(s,T)<\infty$ for $\ell\in \{\det,\stoc\}$.
\end{theorem}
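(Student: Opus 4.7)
The plan is to assemble the theorem from the three ingredients in the literature cited just above, handling the shift, the unweighted $H^\infty$-calculus core estimate, and the weighted/regularity extension in turn.

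First I would reduce to an operator whose spectrum lies in a proper sector of the open left half-plane. Since $\wt{\AA}:=\lambda+\AA$ has a bounded $H^\infty$-calculus of angle strictly less than $\pi/2$, and on the bounded interval $(s,T)$ the perturbation $-\lambda\,\Id\in\calL(X_0)$ is of lower order relative to the graph norm of $X_1$, the equivalence $\wt{\AA}\in\MRtas \Leftrightarrow \AA\in\MRtas$ with comparable constants follows from a standard bounded perturbation/Gr\"onwall argument. This is the linear, finite-horizon analogue of the perturbation theory that the paper itself develops in subsequent sections, and is the reason why the shift parameter $\lambda$ is allowed to be arbitrary.

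Second, I would invoke the main theorem of \cite{MaximalLpregularity} applied to $\wt{\AA}$: the assumption that $X_0$ embeds as a closed subspace of some $L^q$-space with $q\in[2,\infty)$, combined with UMD and type $2$ from Assumption \ref{ass:X} and the bounded $H^\infty$-calculus of angle $<\pi/2$, is exactly the sufficient condition proved there for the unweighted estimate $\wt{\AA}\in\MRtszero$ when $p\in(2,\infty)$; the Hilbert case $p=2$ is classical via It\^o isometry and analyticity of the semigroup.

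Third, to promote $\a=0$ to arbitrary $\a\in[0,\tfrac{p}{2}-1)$ and to extract the $\hz^{\theta,p}(s,T,w_\a^s;X_{1-\theta})$-estimate required by part \eqref{it:SMR_regularity_0} of Definition \ref{def:SMRgeneralized}, I would apply the weighted extrapolation of \cite{LV18} (or equivalently the perturbation approach of \cite{AV19}), which propagates the unweighted $L^p(X_1)$-bound to the weighted scale. The intermediate Bessel-potential regularity then comes by combining the weighted deterministic maximal regularity estimate with the trace-level stochastic convolution estimate and interpolating via the complex method; this equivalence is encoded in \cite[Proposition 3.10]{AV19_QSEE_1}.

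Finally, for the uniform-in-$T$ bound under the additional invertibility hypothesis, I would let $T\to\infty$. Invertibility together with sectoriality of angle $<\pi/2$ forces a negative spectral bound $\sup\{\mathrm{Re}\,z : z\in\sigma(-\AA)\}<0$, so the semigroup generated by $-\AA$ decays exponentially. This exponential dichotomy makes both the deterministic and the stochastic convolution representations absolutely convergent in the weighted norms uniformly in $T$, while translation invariance of the whole set-up in $s$ yields uniformity in $s$ as well. The main obstacle I foresee is making the first (bounded-perturbation) step compatible with this half-line bound, since the Gr\"onwall constant in bounded perturbation degrades in $T$; on $(s,\infty)$ one must therefore work with $\AA$ directly and absorb $-\lambda\,\Id$ using the exponential decay of the semigroup of $\AA$ itself, rather than passing through $\wt{\AA}$ at the final stage.
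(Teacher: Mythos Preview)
The paper does not actually prove this theorem; it merely states it as a collected result from the literature, pointing to \cite{MaximalLpregularity} for the unweighted case $\a=0$, to \cite{AV19} for the extension to $\a\in(0,\tfrac{p}{2}-1)$ via a perturbation argument, and to \cite{LV18} for the full weighted theory. Your sketch identifies precisely these ingredients and assembles them in the intended way, including the standard reduction by the bounded shift $-\lambda\,\Id$ on finite intervals and the observation that invertibility yields exponential decay for the half-line bound; the caveat you flag about the Gr\"onwall constant degrading in $T$ is real and is indeed why on $(s,\infty)$ one works directly with the invertible $\AA$.
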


The following results will be employed several times and has been shown in
\cite[Proposition 3.12 and Lemma 3.13]{AV19_QSEE_1}, where one could replace $\MRtas$ by $\MRtasz$.

\begin{proposition}
\label{prop:causality}
Let $(A,B)\in \MRtas$ and
\begin{equation*}
f\in  L^p_{\Progress}( (s,T)\times\Omega,w_{\a}^{s};X_0),
\quad\text{ and }\quad
g\in L^p_{\Progress}((s,T)\times\Omega,w_{\a}^s;\g(H,X_{1/2})),
\end{equation*}
and set $u=\Sol_{s,(A,B)}(0,f,g)$. Then for each stopping time $\tau$ such that $s\leq \tau\leq T$ a.s.\ and any strong solution $v\in L^p_{\Progress}((s,\tau)\times \O,w_{\a};X_1)$ to \eqref{eq:diffAB_s} the following holds.
\begin{enumerate}[{\rm(1)}]
\item\label{it:causality}
 $v$ can be written as
\begin{equation*}
v=u|_{[s,\tau]\times \Omega}=\Sol_{s,(A,B)}(0,\one_{[s,\tau]}f,\one_{[s,\tau]}g),\qquad \text{ on }[s,\tau]\times\Omega.
\end{equation*}
\item
\label{it:smallness_C_T}
There exists $C_T>0$ independent of $(f,g)$ such that $\lim_{T\downarrow 0}C_T=0$ and
$$
\|v\|_{L^p((s,\tau)\times \O,w_{\a}^s;X_0)}\leq
 C_T(\|f\|_{L^p((s,\tau)\times \O,w_{\a}^s;X_0)}+
 \|g\|_{L^p((s,\tau)\times \O,w_{\a}^s;\g(H,X_{1/2}))}).
$$
\end{enumerate}
\end{proposition}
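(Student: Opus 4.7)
Both items reduce to the following uniqueness principle for strong solutions on a stochastic interval: under the SMR hypothesis, any strong solution $\zeta\in L^p_{\Progress}((s,\tau)\times\Omega,w_\a^s;X_1)$ of \eqref{eq:diffAB_s} with zero data $(u_s,f,g)=(0,0,0)$ vanishes on $[s,\tau]$. This can be obtained by extending $\zeta$ to $[s,T]$ — for instance by approximating $\tau$ from above by $\F_t$-adapted simple stopping times, invoking the SMR estimate on each resulting deterministic subinterval, and using the causality of the solution operator — and then applying the SMR estimate with vanishing right-hand side.

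For part (1), set $u_1:=\Sol_{s,(A,B)}(0,\one_{[s,\tau]}f,\one_{[s,\tau]}g)\in L^p_{\Progress}((s,T)\times\Omega,w_\a^s;X_1)$, which exists because $(A,B)\in\MRtas$. Since the indicator equals $1$ on $[s,\tau]$, the restriction $u_1|_{[s,\tau]}$ is itself a strong solution to \eqref{eq:diffAB_s} on $[s,\tau]$ with data $(0,f,g)$, i.e.\ the same data solved by $v$. Hence $u_1|_{[s,\tau]}-v$ is a strong solution with zero data, and the uniqueness principle forces $v=u_1|_{[s,\tau]}$. Applying the identical argument to $u-u_1$ — both solve \eqref{eq:diffAB_s} on $[s,\tau]$ with data $(0,f,g)$ — gives $u|_{[s,\tau]}=u_1|_{[s,\tau]}=v$.

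For part (2), I identify $v$ with $u_1$ on the whole interval $[s,T]$ via (1), so that the cut-off inputs vanish on $(\tau,T]$. I then bound
\[
\|v\|_{L^p((\tau,T)\times\Omega,w_\a^s;X_0)}\leq\Bigl(\tfrac{(T-s)^{\a+1}}{\a+1}\Bigr)^{1/p}\|v\|_{L^p(\Omega;C([s,T];X_0))}
\]
by pulling the weight out as an $L^1$-factor in time, and control the right-most norm using the trace embedding attached to maximal regularity, which maps the SMR solution space continuously into $L^p(\Omega;C([s,T];\Xap))$ together with $\Xap\hookrightarrow X_0$. Applying the SMR estimate to $u_1$ itself bounds this in turn by the $L^p$-norms of $\one_{[s,\tau]}f$ and $\one_{[s,\tau]}g$, which yields a constant $C_T=O((T-s)^{(\a+1)/p})$ that manifestly vanishes as $T\downarrow s$.

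The most delicate step is the uniqueness statement up to a random stopping time invoked throughout, since the SMR estimate is stated over deterministic intervals; circumventing this requires either a careful approximation of $\tau$ by simple stopping times combined with the causality of $\Sol_{s,(A,B)}$, or a direct It\^o-type energy estimate on $\|\zeta\|_{X_0}^2$ using the stochastic parabolicity packaged into the SMR hypothesis.
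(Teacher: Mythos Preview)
The paper does not prove this proposition here; it is quoted from \cite[Proposition 3.12 and Lemma 3.13]{AV19_QSEE_1}. Your argument for (1) is correct in outline, but your closing paragraph misreads Definition~\ref{def:SMRgeneralized}: the a priori estimate there is explicitly stated for \emph{every} stopping time $\tau\in[s,T]$ and every strong solution in $L^p_{\Progress}((s,\tau)\times\Omega,w_\a^s;X_1)$ on $[s,\tau]$, not only for the deterministic endpoint. Hence a strong solution $\zeta$ on $[s,\tau]$ with zero data satisfies $\|\zeta\|_{L^p((s,\tau)\times\Omega,w_\a^s;X_1)}\leq 0$ directly from the defining inequality, and the uniqueness principle you invoke is immediate. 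No approximation of $\tau$ by simple stopping times and no It\^o energy estimate is required; what you flag as the ``most delicate step'' is in fact a non-issue in this paper's setup.

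For (2) your route via the trace embedding is workable but differs from, and is weaker than, the argument in the cited reference. There one bounds $\|v\|_{X_0}$ directly from the integral identity defining a strong solution: with $u_s=0$ one has $v(t)=\int_s^t(f-Av)\,dr+\int_s^t(Bv+g)\,dW_H$, and estimating the Bochner and stochastic integrals in $X_0$ over an interval of length at most $T-s$ yields a constant $C_T\to 0$ depending only on $p,\a,X_0$ and the embedding $X_{1/2}\hookrightarrow X_0$, \emph{not} on the SMR constants of $(A,B)$. Your argument instead produces $C_T$ of order $(T-s)^{(\a+1)/p}$ times the trace-embedding constant times the SMR constants; for this to tend to zero you must check that those auxiliary constants remain bounded as $T\downarrow s$ (they do, by restriction, but you should say so rather than hide it in the $O(\cdot)$ notation). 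The direct approach is both shorter and gives the uniformity in $(A,B)$ that the applications in Theorem~\ref{t:pertubation} and Lemma~\ref{l:estimates_small_interval} actually rely on.
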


A simple but useful transference result (see \cite[Proposition 3.8]{AV19_QSEE_1}) shows that in most cases $(A,B)\in \MRtasz$ self-improves to the stronger version with $\bullet$. To state the result we use the notation introduced in Assumption \ref{ass:AB_boundedness}.
\begin{proposition}[Transference]
\label{prop:transference}
Let $(A,B)\in \MRtasz$ and assume that there exists $(\wh{A},\wh{B})\in \MRtas$. Then $(A,B)\in \MRtas$ and
$$
C_{(A,B)}^{\ell,\theta,p,\a}(s,T)\leq C(C_{(\wh{A},\wh{B})}^{\ell,\theta,p,\a}(s,T),
C_{({A},{B})}^{\ell,0,p,\a}(s,T),C_{A,B},C_{\wh{A},\wh{B}})
$$
for all $\ell\in \{\deter,\stoc\}$ and $\theta\in [0,\frac{1}{2})\setminus \{\frac{1+\a}{p}\}$.
\end{proposition}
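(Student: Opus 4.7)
The plan is to exploit uniqueness of strong solutions together with a simple algebraic rewriting of \eqref{eq:diffAB_s} that swaps $(A,B)$ for $(\wh{A},\wh{B})$ at the cost of lower-order perturbative data. Fix
\[
f\in L^p_{\Progress}((s,T)\times\Omega,w_{\a}^{s};X_0),\qquad g\in L^p_{\Progress}((s,T)\times\Omega,w_{\a}^{s};\g(H,X_{1/2})),
\]
and let $u:=\Sol_{s,(A,B)}(0,f,g)$, which exists by $(A,B)\in\MRtasz$. By the very definition of this class one has the a priori bound
\[
\|u\|_{L^p((s,T)\times\Omega,w_{\a}^{s};X_1)}\le C_{(A,B)}^{\det,0,p,\a}(s,T)\,\|f\|_{L^p((s,T)\times\Omega,w_{\a}^{s};X_0)}+C_{(A,B)}^{\stoc,0,p,\a}(s,T)\,\|g\|_{L^p((s,T)\times\Omega,w_{\a}^{s};\g(H,X_{1/2}))}.
\]

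Next, I rewrite \eqref{eq:diffAB_s} as an equation with coefficients $(\wh{A},\wh{B})$. Adding and subtracting $\wh{A}u\,dt$ and $\wh{B}u\,dW_H$ in the equation satisfied by $u$, one gets the identity
\[
du+\wh{A}u\,dt=\wt{f}\,dt+(\wh{B}u+\wt{g})\,dW_H,\qquad \wt{f}:=f+(\wh{A}-A)u,\qquad \wt{g}:=g+(B-\wh{B})u.
\]
By Assumption \ref{ass:AB_boundedness} applied to both pairs, $(\wh{A}-A)u\in L^p((s,T)\times\Omega,w_{\a}^{s};X_0)$ and $(B-\wh{B})u\in L^p((s,T)\times\Omega,w_{\a}^{s};\g(H,X_{1/2}))$, with norms controlled by $(C_{A,B}+C_{\wh{A},\wh{B}})\|u\|_{L^p((s,T)\times\Omega,w_{\a}^{s};X_1)}$. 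Thus $u$ is a strong solution of the Cauchy problem with coefficients $(\wh{A},\wh{B})$, data $(\wt{f},\wt{g})$, and initial value $0$. By uniqueness for $(\wh{A},\wh{B})\in\MRtas$ (which is part of the definition), $u=\Sol_{s,(\wh{A},\wh{B})}(0,\wt{f},\wt{g})$.

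Applying the $\MRtas$-estimate for $(\wh{A},\wh{B})$ at regularity level $\theta\in[0,\tfrac12)\setminus\{\tfrac{1+\a}{p}\}$ (resp.\ the $C([s,T];X_{1/2})$-estimate when $p=2$) gives
\[
\|u\|_{L^p(\Omega;\hz^{\theta,p}(s,T,w_{\a}^{s};X_{1-\theta}))}\lesssim K^{\det,\theta,p,\a}_{(\wh{A},\wh{B})}(s,T)\,\|\wt{f}\|_{L^p(\cdots;X_0)}+K^{\stoc,\theta,p,\a}_{(\wh{A},\wh{B})}(s,T)\,\|\wt{g}\|_{L^p(\cdots;\g(H,X_{1/2}))}.
\]
Inserting the bounds for $\wt{f},\wt{g}$ and then the a priori $L^p(X_1)$-estimate for $u$ produces a bound of $\|u\|_{L^p(\Omega;\hz^{\theta,p}(s,T,w_{\a}^{s};X_{1-\theta}))}$ by $\|f\|_{L^p((s,T)\times\Omega,w_{\a}^{s};X_0)}+\|g\|_{L^p((s,T)\times\Omega,w_{\a}^{s};\g(H,X_{1/2}))}$ with multiplicative constant of the form $C(K^{\ell,\theta,p,\a}_{(\wh{A},\wh{B})}(s,T),C^{\ell,0,p,\a}_{(A,B)}(s,T),C_{A,B},C_{\wh{A},\wh{B}})$, which yields $(A,B)\in\MRtas$ with the claimed quantitative estimate.

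The proof is essentially bookkeeping, so there is no substantive obstacle; the only point requiring a bit of care is verifying that the rewritten equation is indeed interpreted in the strong sense (the added and subtracted terms $\wh{A}u$ and $\wh{B}u$ are integrable with respect to $dt$ and stochastically integrable with respect to $dW_H$, respectively, thanks to $u\in L^p(\O\times(s,T),w_{\a}^{s};X_1)$ and Assumption \ref{ass:AB_boundedness}) and that the strong solution to the transferred equation coincides with $u$, which is immediate from uniqueness in $\MRtas$.
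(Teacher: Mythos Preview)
Your argument is correct and is precisely the standard transference trick: rewrite the $(A,B)$-equation as a $(\wh{A},\wh{B})$-equation with perturbed data $\wt{f}=f+(\wh{A}-A)u$, $\wt{g}=g+(B-\wh{B})u$, invoke uniqueness, and apply the $\theta$-level estimate for $(\wh{A},\wh{B})$. The paper does not give its own proof here but cites \cite[Proposition 3.8]{AV19_QSEE_1}, where the same argument appears. One cosmetic remark: your bound for $C_{(A,B)}^{\det,\theta,p,\a}$ in fact also picks up $C_{(\wh{A},\wh{B})}^{\stoc,\theta,p,\a}$ (and symmetrically for $\ell=\stoc$), since $\wt{g}$ is nonzero even when $g=0$; this is harmless for every use of the proposition in the paper and is consistent with reading the displayed bound as allowing dependence on the constants for both values of $\ell$.
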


\section{A perturbation result}

Here and in the rest of this section, for the sake of brevity, for any $\theta\in [0,1]$ and any stopping time $\tau:\O\to [s,T]$, we set
\begin{equation}
\label{eq:E_theta_tau_spaces}
\begin{aligned}
E_{\theta,\kappa}(s,\tau) &= L^p_{\Progress}((s,\tau)\times\Omega,w_{\a}^{s};X_\theta), \\
E_{\theta,\kappa}^{\gamma}(s,\tau)  &= L^p_{\Progress}((s,\tau)\times\Omega,w_{\a}^{s};\gamma(H,X_\theta)).
\end{aligned}
\end{equation}

Before we state and prove our main perturbation result we provide a simple result which allows to reduce the question whether $(A,B)\in \MRtas$ to the same question on a finite partition of $(s,T)$.
\begin{proposition}[Partitions]
\label{prop:partitioninterval}
Let Assumptions \ref{ass:X} and \ref{ass:AB_boundedness} be satisfied. Assume that there exists $(\wh{A},\wh{B})\in \MRtas$ and fix $\delta\in (\frac{1+\a}{p},\frac{1}{2})$ if $p>2$, and any $\delta\in (0,1/2)$ if $p=2$.
Let $s=s_0<s_1<\dots<s_{N-1}<s_N=T$. Assume that for all $j\in\{1,\dots,N-1\}$,
$$
(A,B)\in \MRtasonesequencez,\qquad\quad (A,B)\in \MRtasjsequencez,
$$
and let $M>0$ be such that
$$\max\big\{C^{\deter,0,p,\a_j}_{(A,B)}(s_j,s_{j+1}),C^{\stoc,0,p,\a_j}_{(A,B)}(s_j,s_{j+1})\big\}\leq M$$
where $\a_0:=\a$ and $\a_j:=0$ if $j\geq 1$.
Then $(A,B)\in \MRtas$ and
\begin{equation}
\label{eq:max_reg_partition}
C^{\ell,0,p,\a}_{(A,B)}(s,T)
\leq C(N,M,(s_j)_{j=1}^N,
C_{(\wh{A},\wh{B})}^{\ell,\delta,p,\a}(s,T),
C_{\wh{A},\wh{B}}
),
\end{equation}
for $\ell\in \{\deter,\stoc\}$.
\end{proposition}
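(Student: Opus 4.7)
The plan is to construct the solution on $(s,T)$ piecewise, solving the SPDE on each $(s_j, s_{j+1})$ separately and using the trace of each piece at the right endpoint as initial datum for the next.

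First I would upgrade the hypothesis from non-bullet to bullet SMR on each subinterval. The operator $(\wh{A},\wh{B})\in\MRtas$ restricts to each $(s_j,s_{j+1})$ (with the power weight $w_{\a}^s$ on $(s,s_1)$ and no weight on the subsequent intervals), so by Proposition~\ref{prop:transference} we get $(A,B)\in\MRtasonesequence$ and $(A,B)\in\MRtasjsequence$ for $j=1,\dots,N-1$, together with the quantitative estimates
\[
C^{\ell,\delta,p,\a_j}_{(A,B)}(s_j,s_{j+1})\leq C\big(M,\,C^{\ell,\delta,p,\a_j}_{(\wh{A},\wh{B})}(s_j,s_{j+1}),\,C_{A,B},\,C_{\wh{A},\wh{B}}\big),\qquad \ell\in\{\deter,\stoc\}.
\]
Then, given data $f\in E_{0,\kappa}(s,T)$ and $g\in E_{1/2,\kappa}^{\gamma}(s,T)$, I would set $u_0:=\Sol_{s,(A,B)}(0,f\one_{(s,s_1)},g\one_{(s,s_1)})$ and inductively
\[
u_j:=\Sol_{s_j,(A,B)}\big(u_{j-1}(s_j),\,f\one_{(s_j,s_{j+1})},\,g\one_{(s_j,s_{j+1})}\big),\qquad j=1,\dots,N-1,
\]
invoking the extension of bullet SMR to nonzero initial data in $L^p(\Omega;\Xp)$ stated after Definition~\ref{def:SMRgeneralized}. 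Gluing via $u(t):=u_j(t)$ for $t\in[s_j,s_{j+1}]$ produces a progressively measurable process which is continuous across each $s_j$ (by construction $u_j(s_j)=u_{j-1}(s_j)$), and concatenating the stochastic integrals shows that $u$ is a strong solution of \eqref{eq:diffAB_s} on $[s,T]$. Uniqueness follows by restricting any strong solution to each subinterval and applying the uniqueness portion of Definition~\ref{def:SMRgeneralized} together with Proposition~\ref{prop:causality}, inductively in $j$.

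The main obstacle is the trace step: to make the induction go through we need $u_{j-1}(s_j)\in L^p(\Omega;\Xp)$ with a bound in terms of the data, and observe that the target space is $\Xp$ (not the weaker $\Xap$), because the subsequent interval carries weight $\kappa_j=0$. This is exactly where the bullet version with $\theta=\delta$ from Step~1 is used: by choice of $\delta\in(\tfrac{1+\a}{p},\tfrac12)$ we have
\[
u_{j-1}\in L^p\big(\Omega;\hz^{\delta,p}(s_{j-1},s_j,w_{\a_{j-1}}^{s_{j-1}};X_{1-\delta})\big)\cap L^p\big(\Omega;L^p(s_{j-1},s_j,w_{\a_{j-1}}^{s_{j-1}};X_1)\big).
\]
On any subinterval $[s_j^{-},s_j]\subset(s_{j-1},s_j]$ bounded away from the weight singularity $s_{j-1}$ the weight is equivalent to a constant, so by the standard (unweighted) trace embedding
\[
H^{\delta,p}(s_j^{-},s_j;X_{1-\delta})\cap L^p(s_j^{-},s_j;X_1)\hookrightarrow C([s_j^{-},s_j];\Xp)
\]
(valid since $\delta>1/p$), we obtain $u_{j-1}(s_j)\in L^p(\Omega;\Xp)$ with the required control. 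Finally, iterating this trace bound and summing the SMR estimates over $j=0,\dots,N-1$ yields
\[
\|u\|_{E_{1,\kappa}(s,T)}\leq \sum_{j=0}^{N-1}\|u_j\|_{E_{1,\a_j}(s_j,s_{j+1})}\leq C\big(N,M,(s_j)_{j=1}^N,C^{\ell,\delta,p,\a}_{(\wh{A},\wh{B})}(s,T),C_{\wh{A},\wh{B}}\big)\big(\|f\|_{E_{0,\kappa}(s,T)}+\|g\|_{E_{1/2,\kappa}^{\gamma}(s,T)}\big),
\]
which is precisely \eqref{eq:max_reg_partition}. The stopping-time version of the estimate is obtained in the usual way by combining with Proposition~\ref{prop:causality}.
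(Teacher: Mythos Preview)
Your proof follows essentially the same route as the paper's: upgrade to bullet SMR on each subinterval via transference against $(\wh{A},\wh{B})$, solve piecewise, pass traces in $\Xp$ across the partition points, and glue. The paper organizes the induction slightly differently (it grows the interval, proving $(A,B)\in\mathcal{SMR}^{\bullet}_{p,\a}(0,s_{n-1})\Rightarrow(A,B)\in\mathcal{SMR}_{p,\a}(0,s_n)$, then upgrades to bullet via transference) and invokes the trace embedding from \cite{ALV20} directly rather than going through the $\hz^{\delta,p}$-space by hand, but the substance is the same.

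One small slip to fix: for $j\geq 2$ the piece $u_{j-1}$ has \emph{nonzero} initial datum $u_{j-2}(s_{j-1})$, so it is not in $\hz^{\delta,p}(s_{j-1},s_j;X_{1-\delta})$ (the zero-trace space) when $\delta>1/p$. Either replace $\hz^{\delta,p}$ by $H^{\delta,p}$ there (which is what the extension to nonzero initial data in \cite[Proposition~3.10]{AV19_QSEE_1} actually gives), or---as the paper does---run the induction on the growing intervals $(s,s_n)$ so that the trace at $s_n$ is always extracted from a solution with zero initial value at $s$. With that adjustment your argument is complete.
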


\begin{proof}
We only prove the case $p>2$, since the case $p=2$ is simpler. As usual, we only consider $s=0$. Let us recall that by \cite[Proposition 3.12]{AV19_QSEE_2},
\begin{equation}
\label{eq:AB_hat_t_cut}
(\wh{A},\wh{B})\in \mathcal{SMR}_{p,\a}^{\bullet}(0,T)\subseteq \mathcal{SMR}_{p,\a}^{\bullet}(0,t)\cap \MRtt
\end{equation}
for all $t\in [0,T]$. Thus, by Proposition \ref{prop:transference}, it is enough to show that if $(A,B)\in \mathcal{SMR}^{\bullet}_{p,\a}(0,s_{n-1})$ for a given $2\leq n\leq N$, then $(A,B)\in \mathcal{SMR}_{p,\a}(0,s_{n})$ and that \eqref{eq:max_reg_partition} holds with $s=0$ and $T=s_{n}$.
We content ourself to construct a unique strong solution $u_n$ to \eqref{eq:diffAB_s} on $[0,s_{n}]$ with a corresponding estimate. The fact that each strong solution $v$ to \eqref{eq:diffAB_s} on $[0,\tau]$ where $\tau$ is a stopping time such that $ 0\leq \tau\leq s_{n}$, satisfies $v=u_n|_{[0,\tau]}$ follows analogously.

Consider the problem \eqref{eq:diffAB_s} on $[0,s_n]$ with (see \eqref{eq:E_theta_tau_spaces})
\[u_{0} = 0, \qquad f\in E_{0,\kappa}(0,s_n), \qquad \text{and} \qquad
g\in  E_{1/2,\kappa}^{\g}(0,s_n).\]
Since $(A,B)\in \mathcal{SMR}_{p,\kappa}^{\bullet}(0,s_{n-1})$, and \cite[Theorem 1.2]{ALV20}, the right regularity holds at time $s_{n-1}$, and thus there exists a unique strong solution $u_{n-1}$ to \eqref{eq:diffAB_s} on $[0, s_{n-1}]$ such that
\begin{equation}\label{eq:uLpssn}
\begin{aligned}
\|u_{n-1}\|_{E_{1,\kappa}(0,s_{n-1})}+&
\|u_{n-1}\|_{L^p(\O;C([0,s_{n-1}];\Xp))}
\\&  \lesssim \|f\|_{E_{0,\kappa}(0,s_{n-1})}+
\|g\|_{E_{1/2,\kappa}^{\g}(0,s_{n-1})},
\end{aligned}
\end{equation}
where the implicit constant depends on $s_1,n,(\wh{A},\wh{B}),C^{\deter,0,p,\a}_{(A,B)}(0,s_{n-1}),C^{\stoc,0,p,\a}_{(A,B)}(0,s_{n-1})$. By assumption, Proposition \ref{prop:transference} and \eqref{eq:AB_hat_t_cut}, one has $(A,B)\in \mathcal{SMR}_{p}^{\bullet}(s_{n-1},s_{n})$. Thus, by \cite[Proposition 3.10]{AV19_QSEE_1} we can consider nonzero initial values and by \eqref{eq:uLpssn}, there exists a unique strong solution $U_n$ to the problem
\begin{equation*}
\begin{cases}
\dd U_n(t) +A(t)U_n(t)\, \dd t=f(t) \, \dd t+ (B(t)U_n(t)+g(t))\, \dd W_H(t),\quad t\in [s,s_{n}],\\
U_n(s_{n-1})=u_{n-1}(s_{n-1}),
\end{cases}
\end{equation*}
and
\begin{equation}
\begin{aligned}
\label{eq:uLpssnn}
\|U_n& \|_{E_{1,0}(s_{n-1}, s_n)}
\\& \lesssim
\|u_{n-1}(s_{n-1})\|_{L^p(\O;\Xp)}+
   \|f\|_{E_{0,0}(s_{n-1}, s_n)}+
\|g\|_{E_{1/2,0}^{\gamma}(s_{n-1}, s_n)}\\
&\lesssim
   \|f\|_{E_{0,\kappa}(0, s_n)}+
\|g\|_{E_{1/2,\kappa}^{\gamma}(0, s_n)}.
\end{aligned}
\end{equation}
Setting $u_n:=u_{n-1}$ on $[0,s_{n-1}]$ and $u_n:=U_n$ on $(s_{n-1},s_n]$, it follows that $u_n$ is a strong solution to \eqref{eq:diffAB_s}, and \eqref{eq:max_reg_partition} follows from \eqref{eq:uLpssn}-\eqref{eq:uLpssnn}.
\end{proof}

Let us conclude with a perturbation result which was announced in \cite{AV19_QSEE_1}. A version with random initial times, but without lower order terms (i.e.\ $L_A=L_B = 0$) was given in \cite{AV19_QSEE_2}. The perturbation result will be used in Section \ref{sec:SMRsecond} to treat $x$-dependent coefficients.

\begin{theorem}[Perturbation]
\label{t:pertubation}
Let Assumptions \ref{ass:X} and \ref{ass:AB_boundedness} be satisfied.
Assume that $(A,B)\in \mathcal{SMR}^{\bullet}_{p,\kappa}(s,T)\cap \mathcal{SMR}_{p}(s,T)$.
Fix $\delta\in (\frac{1+\kappa}{p},\frac{1}{2})$ if $p>2$, and any $\delta\in (0,\frac{1}{2})$ if $p=2$. Let $A_0:[s,T]\times \Omega\to{\mathscr L}(X_1,X_0)$, $B_0:[s,T]\times \Omega\to {\mathscr L}(X_1,\gamma(H,X_{1/2}))$ be strongly progressively measurable such that, for some positive constants $C_A,C_B\in (0,1)$, $L_A,L_B\in (0,\infty)$ and a.s.\ for all $x\in X_1$, $t\in (s,T)$,
\begin{align*}
\|A_0(t,\cdot)x\|_{X_0}&\leq C_{A} \|x\|_{X_1}+L_A\|x\|_{X_0}, \\
\|B_0(t,\cdot)x\|_{\g(H,X_{1/2})}&\leq C_{B} \|x\|_{X_1}+L_B\|x\|_{X_0}.
\end{align*}
Suppose that, for all $\mu\in \{0,\kappa\}$,
\begin{equation}
\label{eq:smallness_condition_perturbation}
 C^{\mathrm{det},0,p,\mu}_{(A,B)}(s,T)C_A+C^{\mathrm{sto},0,p,\mu}_{(A,B)}(s,T) C_B<1.
\end{equation}
Then $(A+A_0,B+B_0)\in \mathcal{SMR}^{\bullet}_{p,\kappa}(s,T)$ and, for $\ell\in \{\mathrm{det},\mathrm{sto}\}$,
\begin{align*}
C_{(A+A_0,B+B_0)}^{\ell,0,p,\kappa}(s,T)
\leq C\Big(p,\kappa,X_0,X_1,C_{A,B},C_A,C_B,L_A,L_B,\Big\{K^{\ell,\delta,p,\mu}_{(A,B)}(s,T)\Big\}_{{\ell\in \{\deter,\stoc\},\ \mu\in \{0,\kappa\}}}\Big).
\end{align*}
\end{theorem}

The key point in the above is the independence of the smallness condition \eqref{eq:smallness_condition_perturbation} on $L_A$ and $L_B$.
If one only assumes $(A,B)\in \mathcal{SMR}^{\bullet}_{p,\kappa}(s,T)$, then the proof below yields $(A+A_0,B+B_0)\in \mathcal{SMR}^{\bullet}_{p,\kappa}(s,t) $ for some $t<T$ provided \eqref{eq:smallness_condition_perturbation} holds for $\ell=\kappa$.

\begin{proof}
As usual, we set $s=0$. It suffices to prove the result on a suitable partition by Proposition \ref{prop:partitioninterval}. The proof is divided into three steps.

\textit{Step 1: There exists $s_1,C_1>0$ depending only on $p$, $\a$, $X_0$, $X_1$, $C_A$, $C_B$, $L_A$, $L_B$, $C_{A,B}$, $C^{\mathrm{det},0,p,\kappa}_{(A,B)}(0,T)$,  $C^{\mathrm{sto},0,p,\kappa}_{(A,B)}(0,T)$ such that $(A+A_0,B+B_0)\in \MRtasonesequencezer$ and}
$$
C_{(A+A_0,B+B_0)}^{\det,0,p,\a}(0,s_1) + C_{(A+A_0,B+B_0)}^{\stoc,0,p,\a}(0,s_1)\leq C_1.
$$
Fix $t\in [0,T]$, and let $\tau:\O\to [0,t]$ be a stopping time and let $E_{\theta,\kappa}(0,\tau)$ and $E_{\theta,\kappa}^{\g}(0,\tau)$ be as in \eqref{eq:E_theta_tau_spaces}. To prove the required result we use a stochastic version of the method of continuity (see \cite[Proposition 3.13]{AV19_QSEE_2}). To this end, for $\lambda\in [0,1]$, set $A^{(\lambda)}:=A+\lambda A_0$ and $B^{(\lambda)}:=B+\lambda B_0$. By \eqref{eq:smallness_condition_perturbation} with $\mu=\kappa$, we have
$$\eta:=1-C^{\det,0,p,\a}_{(A,B)}(0,T)C_A-C^{\stoc,0,p,\a}_{(A,B)}(0,T)C_B>0.$$
Let $\Sol:=\Sol_{0,(A,B)}$ be the solution operator associated to $(A,B)$. With the above choice of $A^{(\lambda)}, B^{(\lambda)}$ and Proposition \ref{prop:causality}, any strong solution $u\in L^p_{\Progress}((0,\tau)\times \Omega,w_{\a};X_1)$ to
\begin{equation*}
\begin{cases}
 \dd u(t) +A^{(\lambda)}u\, \dd t=f \, \dd t+ (B^{(\lambda)}u+g)\, \dd W_H,\quad \text{on} \ [0,\tau],\\
u(0)=0,
\end{cases}
\end{equation*}
satisfies
$$
u=\Sol(0,\one_{[0,\tau]}(f-\lambda A_0 u),\one_{[0,\tau]} (g+\lambda B_0 u)),\qquad \text{ on }[0,\tau]\times \Omega.
$$
For notational convenience, we set $v:=\Sol(0,\one_{[0,\tau]}(f-\lambda A_0 u),\one_{[0,\tau]} (g+\lambda B_0 u))$ on $[0,t]\times \Omega$. Thus $v|_{[0,\tau]\times \Omega}=u$. Using stochastic maximal $L^p$-regularity and $C^{\ell,0,p,\a}_{(A,B)}(0,t)\leq C^{\ell,0,p,\a}_{(A,B)}(0,T)$ for $t\leq T$ and $\ell\in \{\deter,\stoc\}$, we obtain
\begin{equation}
\label{eq:perturbation_step_1_1}
\begin{aligned}
\|u &\|_{E_{1,\kappa}(0,\tau)}
\leq\|v\|_{E_{1,\kappa}(0,t)}\\
&\leq C^{\det,0,p,\a}_{(A,B)}(0,T)\|f-\lambda A_0 u\|_{E_{0,\kappa}(0,\tau)}
+C^{\stoc,0,p,\a}_{(A,B)}(0,T)\|g+\lambda B_0 u\|_{E_{1/2,\kappa}^{\g}(0,\tau)}\\
&\leq (1-\eta)\| u\|_{E_{1,\kappa}(0,\tau)}+
\big[C^{\det,0,p,\a}_{(A,B)}(0,T)L_A+C^{\stoc,0,p,\a}_{(A,B)}(0,T)L_B\big]\| u\|_{E_{0,\kappa}(0,\tau)}\\
&\quad +C^{\det,0,p,\a}_{(A,B)}(0,T)\|f\|_{E_{0,\kappa}(0,\tau)}+C^{\stoc,0,p,\a}_{(A,B)}(0,T)\|g\|_{E_{1/2,\kappa}^{\g}(0,\tau)}.
\end{aligned}
\end{equation}
To estimate $\|u\|_{E_{0,\kappa}(0,\tau)}$ note that, by \cite[Lemma 3.13]{AV19_QSEE_1},
\begin{equation}
\begin{aligned}
\label{eq:perturbation_step_1_0}
\|u\|_{E_{0,\kappa}(0,\tau)}
&\leq c(t)\big(\|f-\lambda A_0 u\|_{E_{0,\kappa}(0,\tau)}+\|g+\lambda B_0 u\|_{E_{1/2,\kappa}^{\g}(0,\tau)}\big)\\
&\leq C(t)\big(\|u\|_{E_{1,\kappa}(0,\tau)}+\|f\|_{E_{0,\kappa}(0,\tau)}+\|g\|_{E_{1/2,\kappa}^{\g}(0,\tau)}\big)
\end{aligned}
\end{equation}
where $c(t)$ and $C(t)$ depend only on $p$, $\kappa$, $X_0$, $X_1$, $C_{A,B}$, $L_A$, $L_B$ and satisfy $\lim_{t\downarrow 0}c(t)=\lim_{t\downarrow 0}C(t)=0$ (here we also used that $C_A,C_B<1$). Next, we choose $s_1>0$ such that
$$
\big[C^{\det,0,p,\a}_{(A,B)}(0,T)L_A+C^{\stoc,0,p,\a}_{(A,B)}(0,T)L_B\big]C(s_1)<\frac{\eta}{2},
$$
Combining \eqref{eq:perturbation_step_1_1} and \eqref{eq:perturbation_step_1_0} we find the a priori estimate
\[\frac{\eta}{2} \|u\|_{E_{1,\kappa}(0,s_1)}\leq \Big(C^{\det,0,p,\a}_{(A,B)}(0,T)+\frac{\eta}{2}\Big)\|f\|_{E_{0,\kappa}(0,\tau)} + \Big(C^{\stoc,0,p,\a}_{(A,B)}(0,T)+\frac{\eta}{2}\Big)\|g\|_{E_{1/2,\kappa}^{\g}(0,\tau)}.\]
As the latter is uniform in $\lambda\in [0,1]$, the result follows from the method of continuity (see \cite[Proposition 3.13]{AV19_QSEE_2}).

\textit{Step 2: There exist $s',C'>0$ depending only on $p$, $X_0$, $X_1$, $C_A$, $C_B$, $L_A$, $L_B$, $C_{A,B}$, $C^{\mathrm{det},0,p,0}_{(A,B)}(0,T)$ $C^{\mathrm{sto},0,p,0}_{(A,B)}(0,T)$ such that, for each $t\in [0,T)$, one has $(A,B)\in \mathcal{SMR}_{p}(t,t')$ with $t':=\min\{t+s',T\}$ and
$$
\max\big\{C_{(A+A_0,B+B_0)}^{\det,0,p,0}(t,t'), C_{(A+A_0,B+B_0)}^{\stoc,0,p,0}(t,t')\big\}\leq C'.
$$ }
Note that, by \cite[Proposition 3.12]{AV19_QSEE_2} and $(A,B)\in \mathcal{SMR}_{p}(0,T)$, for all $\ell\in \{\mathrm{det},\mathrm{sto}\}$ and all $t'\in (t,T]$,
\begin{equation*}
C_{(A,B)}^{\ell,0,p,0}(t,t') \leq 
C_{(A,B)}^{\ell,0,p,0}(0,T).
\end{equation*}
Then \eqref{eq:smallness_condition_perturbation} for $\mu=0$ and the previous estimate gives
\begin{equation}
\label{eq:bound_constant_SMR_perturbation}
C_{(A,B)}^{\deter,0,p,0}(t,t')C_A+
C_{(A,B)}^{\stoc,0,p,0}(t,t')C_B <1.
\end{equation}
Up to a translation argument,
the claim of this step follows by repeating the argument in Step 1 with $\a=0$ and \eqref{eq:bound_constant_SMR_perturbation}.

\textit{Step 3: Conclusion}. Let $s_1$ and $s'>0$ be as in Step 1 and 2, respectively. The claim follows from Steps 1-2 and Proposition \ref{prop:partitioninterval} by setting $(\wh{A},\wh{B})=(A,B)$, $s_j:=\min\{s_1+js',T\}$ for $j\geq 2$ and choosing $N\in \N$ such that $s_1+N s'>T$.
\end{proof}

\section{Pointwise multiplication}
For the proofs in Section \ref{sec:SMRsecond} we need a result on pointwise multiplication in Bessel potential spaces. A standard reference for such results is \cite{RuSi}. Some extensions to the vector-valued setting can be found in \cite[Theorem 5.10]{MV15_multiplication}. Some of the results below are well-known (see \cite[Proposition 1.1, Chapter 2]{ToolsPDEs}). Since we could not find the all assertions we need in the literature, we include some details of the proof.
\begin{proposition}
\label{prop:multiplication_negative}
Let $H$ a Hilbert space, and let $\Dom$ either be $\R^d$, $\R^d_+$, $\T^d$, or a bounded Lipschitz domain. Let $s>0$, $q\in (1,\infty)$ and $\tau\in (s, \infty)$. Then the following estimates hold whenever the right-hand side is finite:
\begin{enumerate}[{\rm(1)}]
\item\label{it:multiplication_negative1}
$\|f g\|_{H^{s,q}(\Dom;H)}\lesssim
\|f\|_{H^{s,q_1}(\Dom)}\|g\|_{L^{q_2}(\Dom;H)}+
\|g\|_{H^{s,r_1}(\Dom;H)}\|f\|_{L^{r_2}(\Dom)}$
provided $q_1, r_1\in (1, \infty)$ and $q_2,r_2\in (1,\infty]$ satisfy $\frac{1}{q_1}+\frac{1}{q_2} = \frac{1}{r_1}+\frac{1}{r_2}=\frac{1}{q}$;
\item\label{it:multiplication_negative2}
$\|f g\|_{H^{s,q}(\Dom;H)}\lesssim
\|f\|_{H^{s,q}(\Dom)}\|g\|_{L^{\infty}(\Dom;H)}+
\|g\|_{C^{\tau}(\Dom;H)}\|f\|_{L^q(\Dom)}$;
\item\label{it:multiplication_negative3}
$
\displaystyle{
\|f g\|_{H^{-s,q}(\Dom;H)}\lesssim
\|f\|_{H^{-s,q}(\Dom)}\|g\|_{L^{\infty}(\Dom;H)}+
\|g\|_{H^{\tau,\zeta}(\Dom;H)}\|f\|_{H^{-s-\varepsilon,q}(\Dom)}
}
$
provided $\tau>\frac{d}{\zeta}$ and $\zeta\in [q',\infty)$ and where $\varepsilon>0$ only depends on $(d,q,s,\tau,\zeta)$;
\item\label{it:multiplication_negative4}
$
\displaystyle{
\|f g\|_{H^{-s,q}(\Dom;H)}\lesssim
\|f\|_{H^{-s,q}(\Dom)}\|g\|_{L^{\infty}(\Dom;H)}+
\|g\|_{C^{\tau}(\Dom;H)}\|f\|_{H^{-s-\varepsilon,q}(\Dom)},
}
$
where $\varepsilon\in (0,\tau-s)$ is arbitrary.
\end{enumerate}
\end{proposition}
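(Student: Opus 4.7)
The plan is to reduce each estimate to the model case $\Dom=\R^d$ via standard extension--restriction arguments, and then exploit Bony's paraproduct decomposition. For $\Dom=\T^d$ the reduction is done by periodization; for $\Dom=\R^d_+$ by reflection; and for a bounded Lipschitz domain by Rychkov's universal extension. In all cases the extension is bounded on $H^{s,q}(\Dom;H)$ and $C^\tau(\Dom;H)$ for every admissible $(s,q,\tau)$ and every Hilbert space $H$. Once on $\R^d$, the product is split as $fg=\Pi_1(f,g)+\Pi_2(f,g)+R(f,g)$, where $\Pi_1$ is a low-times-high paraproduct (low-frequency factor $f$, high-frequency factor $g$), $\Pi_2$ is its symmetric counterpart, and $R$ is the resonant remainder. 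This decomposition enjoys the usual square-function estimates in the Hilbert-valued Bessel potential scale because $H$ is UMD of type~$2$.

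For the positive-smoothness assertions~\ref{it:multiplication_negative1} and~\ref{it:multiplication_negative2}, each piece is bounded via the Fefferman--Stein vector-valued maximal inequality: $\Pi_1$ is controlled by $\|f\|_{L^{r_2}}\|g\|_{H^{s,r_1}(H)}$, $\Pi_2$ symmetrically by $\|f\|_{H^{s,q_1}}\|g\|_{L^{q_2}(H)}$, and $R$ is absorbed into either term using $s>0$. This is the Hilbert-valued version of the Moser-type estimate \cite[Theorem~5.10]{MV15_multiplication} specialized to a scalar factor~$f$. For~\ref{it:multiplication_negative2} one uses $C^{\tau}(\Dom;H)\hookrightarrow B^{s}_{\infty,1}(\Dom;H)$ (valid for $\tau>s$) as a substitute for the unavailable endpoint $H^{s,\infty}(\Dom;H)$; the paraproduct estimates then go through with $q_2=r_2=\infty$.

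The negative-smoothness assertions~\ref{it:multiplication_negative3} and~\ref{it:multiplication_negative4} are obtained by duality. Identifying $H^{-s,q}(\Dom;H)\simeq (H^{s,q'}(\Dom;H))^*$, for $\varphi\in H^{s,q'}(\Dom;H)$ one has $\langle fg,\varphi\rangle=\langle f,\langle g,\varphi\rangle_H\rangle$, with $\langle g,\varphi\rangle_H$ a scalar function on $\Dom$. For~\ref{it:multiplication_negative4}, applying the bilinear analogue of~\ref{it:multiplication_negative2} (which inherits from the scalar case by polarization, since $H$ is Hilbert) yields
\[
\|\langle g,\varphi\rangle_H\|_{H^{s,q'}}\lesssim \|g\|_{L^\infty(H)}\|\varphi\|_{H^{s,q'}(H)}+\|g\|_{C^{\tau}(H)}\|\varphi\|_{L^{q'}(H)},
\]
and pairing with $f$, together with $H^{-s,q}\hookrightarrow H^{-s-\varepsilon,q}$ and $H^{s,q'}\hookrightarrow L^{q'}$, gives~\ref{it:multiplication_negative4}. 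For~\ref{it:multiplication_negative3} the same scheme works with the H\"older bound on $g$ replaced by an $H^{\tau,\zeta}$-bound: the integrability gap is closed by a Sobolev embedding $H^{s+\varepsilon,q_1'}(\Dom;H)\hookrightarrow H^{s,q'}(\Dom;H)$ with $1/q_1'=1/q'-1/\zeta$, while the hypothesis $\tau>d/\zeta$ provides the pointwise $L^\infty$-control on $g$ needed to close the bilinear paramultiplication estimate applied to $\langle g,\varphi\rangle_H$.

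The main obstacle is the bookkeeping in~\ref{it:multiplication_negative3}: one must simultaneously satisfy the Sobolev condition $\tau>d/\zeta$, the H\"older-type multiplication condition arising from the positive-smoothness step, and the exponent matching $1/q_1'+1/\zeta=1/q'$, while keeping $\varepsilon>0$ strictly positive. This is what forces the quantitative dependence $\varepsilon=\varepsilon(d,q,s,\tau,\zeta)$ recorded in the statement.
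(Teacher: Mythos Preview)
Your treatment of \eqref{it:multiplication_negative1} and \eqref{it:multiplication_negative2} via paraproducts and extension matches the paper's approach. The gap is in the duality step for \eqref{it:multiplication_negative3} and \eqref{it:multiplication_negative4}.

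As written, you first bound the full norm $\|\langle g,\varphi\rangle_H\|_{H^{s,q'}}$ by $\|g\|_{L^\infty}\|\varphi\|_{H^{s,q'}}+\|g\|_{C^\tau}\|\varphi\|_{L^{q'}}$ and only then pair with $f$. But the pairing $|\langle f,\langle g,\varphi\rangle_H\rangle|\le \|f\|_{H^{-s,q}}\|\langle g,\varphi\rangle_H\|_{H^{s,q'}}$ places $\|f\|_{H^{-s,q}}$ in front of \emph{both} terms, so you end up with
\[
\|fg\|_{H^{-s,q}}\lesssim \|f\|_{H^{-s,q}}\|g\|_{L^\infty}+\|f\|_{H^{-s,q}}\|g\|_{C^\tau},
\]
which is just the standard multiplier bound, not \eqref{it:multiplication_negative4}. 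The embedding $H^{-s,q}\hookrightarrow H^{-s-\varepsilon,q}$ you invoke goes the wrong way: it lets you \emph{enlarge} the right-hand side, not shrink the second term to $\|f\|_{H^{-s-\varepsilon,q}}$. The whole point of the estimate (and its use in the localization argument of Lemma~\ref{l:estimates_small_interval}) is precisely that the $C^\tau$-term carries only the \emph{lower-order} norm of $f$, so that it can later be absorbed; your version loses this structure.

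The fix is not to estimate $\langle g,\varphi\rangle_H$ in a single norm, but to paraproduct-decompose it first and pair each piece with $f$ in a \emph{different} dual space. With $\langle g,\varphi\rangle_H=T_g\varphi+R(g,\varphi)+T_\varphi g$, the pieces $T_g\varphi+R(g,\varphi)$ lie in $H^{s,q'}$ with norm $\lesssim\|g\|_{L^\infty}\|\varphi\|_{H^{s,q'}}$ and are paired with $f\in H^{-s,q}$; the key piece $T_\varphi g$ enjoys the \emph{smoothing} bound $\|T_\varphi g\|_{H^{s+\varepsilon,q'}}\lesssim\|g\|_{C^\tau}\|\varphi\|_{L^{q'}}$ (resp.\ $\|g\|_{H^{\tau,\zeta}}\|\varphi\|_{H^{s,q'}}$ via Sobolev embeddings for \eqref{it:multiplication_negative3}), and is paired with $f\in H^{-s-\varepsilon,q}$. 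This split pairing is exactly what produces the $\varepsilon$-gain, and it is not recoverable from a single $H^{s,q'}$-estimate on the product.
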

The point in \eqref{it:multiplication_negative3} and \eqref{it:multiplication_negative4} is to find some $\varepsilon>0$ for which the stated estimate holds. Later on this will be enough to treat the term $\|f\|_{H^{-s-\varepsilon,q}(\Dom)}$ as a lower order perturbation in localization arguments. The admissible values of $\varepsilon$ can be obtained from the proof below. Finally, we note that one can also take $f$ to be $H$-valued and $g$ scalar-valued in the result and proof below. Moreover, one can even take $f$ and $g$ both $H$-valued if one replaces $fg$ by the inner-product $(f,g)_H$.
\begin{proof}
Since there exists a universal extension operator for $\Dom$, it suffices to consider $\Dom = \R^d$. For details on extension operators we refer to \cite[VI.3]{Stein70} in the integer case. The non-integer case can be obtained by complex interpolation or by using \cite{Rychkov1999} and the standard fact that $F^{s}_{p,2} = H^{s,p}$. All these results extend to the Hilbert space-valued setting with only minor modifications. In the proof below we use the notations of \cite{ToolsPDEs} for Bony's paraproducts. Let $(\psi_j)_{j\geq 0}$ be a Littlewood-Paley partition of the unity \cite[p.\ 4]{ToolsPDEs} and, for any $k\in\N$, set $\Psi_k:=\sum_{j=0}^k \psi_j$. Then $\Psi_k = \Psi_0(2^{-k}\cdot)$. For any $f\in \Sc'$, set $\psi_j(D)f:=\Fou^{-1}(\psi_j(\cdot)\Fou(f))$ where $\Fou$ denotes the Fourier transform on $\R^d$. Then, for any $f,g\in \Sc'$, Bony's decomposition of the product is given by $fg:=T_f g + R(f,g)+T_g f$, where
$$
T_f g:=\sum_{k\geq 5}\Psi_{k-5}(D)f\psi_{k+1}(D)g,\qquad
R(f,g)=\sum_{|j-k|\leq 4} \psi_j(D)f \psi_k(D) g,
$$
whenever these series converge in $\Sc'$.
The operator $T_f g$ is called Bony's paraproduct. The above extends to the case either $f$ or $g$ is in $\Sc'(\R^d;H)$. Moreover, by replacing the product by inner products in $H$ one can also define $(f,g)_H$ in $\Sc'(\R^d)$ whenever the appropriate series converge in $\Sc'(\R^d)$. All result below have versions for these situations, and we will need this in the duality argument in the proofs of \eqref{it:multiplication_negative3} and \eqref{it:multiplication_negative4}.

\eqref{it:multiplication_negative1}: This follows from \cite[Proposition 1.1, Chapter 2]{ToolsPDEs}. Actually only the case $H = \R$ is considered there, but the extension to the Hilbert space-valued setting is straightforward. In particular, note that Littlewood--Paley type estimates also hold in this setting, replacing the absolute values in the square functions by the norms $\|\cdot\|_H$.

\eqref{it:multiplication_negative2}: This can be proved in a similar way as \eqref{it:multiplication_negative1}. It suffices to bound each of the terms in the paraproduct. To indicate the required change we recall from (see \cite[Theorem 2.5.7(6)]{Tri83} and \cite[Theorem 2.7.2.1]{Tr1}) that
\begin{align}\label{eq:ZygmundHolder}
\|g\|_{C^{t}}\eqsim \sup_{j\geq 0}2^{jt}\|\psi_j(D)g\|_{L^{\infty}}, \ \ \text{for all $t\in (0,\infty)\setminus \N$}.
\end{align}
By (1.4)-(1.5) in \cite[Chapter 2]{ToolsPDEs},
\begin{equation}
\label{eq:R_T_g_estimates}
\|R(f,g)\|_{H^{s,q}}+\|T_g f\|_{H^{s,q}}\lesssim \|g\|_{L^{q_1}}\|f\|_{H^{s,q_2}}\ \ \text{ where } \ \ \frac{1}{q_1}+\frac{1}{q_2}=\frac{1}{q},
\end{equation}
with $q_1\in (q, \infty]$ and $q_2\in (q,\infty)$. Setting $q_1 = \infty$ part of  \eqref{it:multiplication_negative2} follows and it remains to estimate $T_f g$.
Reasoning as in \cite[(1.6), Chapter 2]{ToolsPDEs}, one obtains that for all $\gamma>\varepsilon\geq 0$ such that $s+\gamma\notin \N$ (in order to apply \eqref{eq:ZygmundHolder})
\begin{equation}
\begin{aligned}
\label{eq:T_g_estimate_varepsilon}
\|T_f g\|_{H^{s+\varepsilon,q}}
&\eqsim \Big\|\Big(\sum_{k\geq 5} 2^{2k(s+\varepsilon) } |\Psi_{k-5}(D) f|^2 \|\psi_k(D) g\|_H^{2}\Big)^{1/2}\Big\|_{L^q}
\\ & \leq \Big\|\sup_{k\geq 0}2^{(s+\gamma)k}\|\psi_k(D) g\|_H\Big\|_{L^\infty} \Big\|\Big(\sum_{k\geq 5} 2^{-2k(\gamma-\varepsilon)} |\Psi_{k-5}(D) f|^2 \Big)^{1/2}\Big\|_{L^q}
\\
&\lesssim \|g\|_{C^{s+\gamma}}\Big\|\sup_{k\geq 0}|\Psi_{k}(D) f|\Big\|_{L^q}\\
&\lesssim \|g\|_{C^{s+\gamma}} \|f\|_{L^q} ,
\end{aligned}
\end{equation}
where in the last inequality we used \cite[Proposition 2.3.9 and Theorem 2.3.2]{Analysis1}. With $\varepsilon=0$ and $s+\gamma\in (s,\tau]\setminus\N$, the remaining estimate for \eqref{it:multiplication_negative2} follows.

\eqref{it:multiplication_negative3}: By Sobolev embedding we may assume $\zeta\in (q',\infty)$. We use a duality argument.
Note that $\Sc(\R^d;H)$ is dense in $H^{t,q}$ and $(H^{-t,q})^*=H^{t,q'}$ for all $t\in \R$ and $q\in (1, \infty)$ (see \cite[Propositions 5.6.4 and 5.6.7]{Analysis1} for the vector-valued case).
By approximation it suffices to consider $f\in \Sc(\R^d)$.
Let $h\in \Sc(\R^d;H)$ and $g\in H^{\tau,\zeta}(\R^d;H)$ and note that $H^{\tau,\zeta}\embed L^{\infty}$ by Sobolev embeddings. Bony's decomposition yields
\begin{equation}
\label{eq:claim_T_h_g_Step_1_0_0}
|\l fg ,h\r|=|\l f,(g,h)_H\r|\leq |\l f, T_g h\r|+|\l f,R(g,h)\r|+ |\l f, T_h g\r|,
\end{equation}
where we need the analogues of $T_g, T_h$ and $R$ where inner products are taken into account. By applying \eqref{eq:R_T_g_estimates} we find
\begin{equation}
\label{eq:claim_T_h_g_Step_1_0}
\begin{aligned}
|\l f, T_g h\r|+|\l f,R(g,h)\r|
&\leq \|f\|_{H^{-s,q}}\big(\|T_g h\|_{H^{s,q'}}+\|R(g,h)\|_{H^{s,q'}}\big)\\
&\lesssim \|f\|_{H^{-s,q}}\|g\|_{L^{\infty}(\R^d;H)}\| h\|_{H^{s,q'}(\R^d;H)}.
\end{aligned}
\end{equation}
Since
$
|\l f, T_h g\r|\lesssim \|f\|_{H^{-s-\varepsilon,q}}\|T_h g\|_{H^{s+\varepsilon,q'}}
$
for all $\varepsilon>0$, it remains to prove
\begin{equation}
\label{eq:claim_T_h_g_Step_1}
\|T_h g\|_{H^{s+\varepsilon,q'}}\lesssim \|h\|_{H^{s,q'}(\R^d;H)}\|g\|_{H^{\tau,\zeta}(\R^d;H)}
\end{equation}
where $\varepsilon>0$ depends solely on $(d,q,s,\tau,\zeta)$. Indeed, if \eqref{eq:claim_T_h_g_Step_1} holds, then inserting \eqref{eq:claim_T_h_g_Step_1_0}-\eqref{eq:claim_T_h_g_Step_1} in \eqref{eq:claim_T_h_g_Step_1_0_0}, we obtain
$$
|\l fg ,h\r| \lesssim (\|f\|_{H^{-s,q}}\|g\|_{L^{\infty}(\R^d;H)}+\|f\|_{H^{-s-\varepsilon,q}}\|g\|_{H^{\tau,\zeta}(\R^d;H)})\|h\|_{H^{s,q'}(\R^d;H)}.
$$
By taking the supremum over all $h\in \Sc(\R^d; H)$ with $\|h\|_{H^{s,q'}(\R^d;H)}\leq 1$, the estimate in \eqref{it:multiplication_negative3} follows by density and duality.

To prove \eqref{eq:claim_T_h_g_Step_1} we will use the following estimation:
\begin{align*}
\|T_h g\|_{H^{s+\varepsilon,q'}}\stackrel{\eqref{eq:R_T_g_estimates}}{\lesssim} \|h\|_{L^{q_1}(\R^d;H)} \|g\|_{H^{s+\varepsilon,q_2}(\R^d;H)} \stackrel{(*)}{\lesssim} \|h\|_{H^{s,q'}(\R^d;H)} \|g\|_{H^{\tau,\zeta}(\R^d;H)}.
\end{align*}
Here we apply \eqref{eq:R_T_g_estimates} with $\frac{1}{q_1}+\frac{1}{q_2}=\frac{1}{q'}$, and in $(*)$ we used Sobolev embedding which gives the restrictions
\begin{align}\label{eq:restrictionnegpoint}
(i): \ s-\frac{d}{q'}\geq -\frac{d}{q_1}, \ \ \text{and} \ \ (ii): \ q_2>\zeta, \ \ \text{and} \ \ (iii): \ \tau-\frac{d}{\zeta} \geq s+\varepsilon - \frac{d}{q_2}.
\end{align}
Note that the first condition in \eqref{eq:restrictionnegpoint} is equivalent to $s\geq d/q_2$. Since we assumed $\zeta>q'$, we can always find $q_1$ such that $\frac{1}{q_1}+\frac{1}{q_2}=\frac{1}{q'}$ for any given $q_2>\zeta$. In order to find parameters $(\varepsilon,q_2)$ for which \eqref{eq:restrictionnegpoint} holds, we consider two cases.

\emph{Case $\zeta\geq d/s$}. Let $\varepsilon\in (0,\tau-s)$ be arbitrary and set $q_2 = \zeta+\delta$ with $\delta>0$ arbitrary but fixed and will be chosen below. Then (i) and (ii) of \eqref{eq:restrictionnegpoint} hold, and (iii) is equivalent to $\tau-s-\varepsilon\geq \frac{d}{\zeta}  - \frac{d}{\zeta+\delta}$ which holds if $\delta>0$ is chosen small enough.

\emph{Case $\zeta<d/s$}. Let $\varepsilon = \tau-\frac{d}{\zeta}>0$ and $q_2 = d/s$. Then (i), (ii) and (iii) of \eqref{eq:restrictionnegpoint} are clear.

\eqref{it:multiplication_negative4}: The proof follows as in the previous step, where the following variant of \eqref{eq:claim_T_h_g_Step_1} should be used:
\[\|T_h g\|_{H^{s+\varepsilon,q}(\R^d)}\lesssim \|g\|_{C^{\tau}}\|h\|_{L^{q'}(\R^d;H)}\lesssim \|g\|_{C^{\tau}(\R^d;H)}\|h\|_{H^{s,q'}(\R^d;H)},\]
where we also used \eqref{eq:T_g_estimate_varepsilon}.
\end{proof}

As a consequence of Proposition \ref{prop:multiplication_negative}\eqref{it:multiplication_negative1} we obtain:
\begin{corollary}
\label{cor:multiplication_Sobolev}
Let $H$ a Hilbert space, and let $\Dom$ either be $\R^d$, $\R^d_+$, $\T^d$, or a bounded Lipschitz domain. Let $s\in (0,\infty)$ and $q\in (1,\infty)$. Suppose that $\xi\in (1,q]$ and $\eta\geq s$ satisfy $\eta>\frac{d}{\xi}$, $\eta-\frac{d}{\xi}\geq s-\frac{d}{q}$. Then there exists $\varepsilon(d,\xi,\eta,\reg,q)>0$ such that
$$
\|f g\|_{H^{s,q}(\Dom;H)}
\lesssim
\|f\|_{H^{s,q}}\|g\|_{L^{\infty}(\Dom;H)} +
\|f\|_{H^{s-\varepsilon,q}}\|g\|_{H^{\eta,\xi}(\Dom;H)}
$$
whenever the right-hand side is finite.
\end{corollary}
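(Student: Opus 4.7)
The plan is to reduce the estimate directly to Proposition~\ref{prop:multiplication_negative}\eqref{it:multiplication_negative1}. Apply it with $(q_1,q_2)=(q,\infty)$: the first summand on the right-hand side becomes immediately $\|f\|_{H^{s,q}}\|g\|_{L^\infty(\Dom;H)}$, which matches the target. The remaining task is therefore to dominate the second summand $\|g\|_{H^{s,r_1}(\Dom;H)}\|f\|_{L^{r_2}(\Dom)}$ by $\|f\|_{H^{s-\varepsilon,q}}\|g\|_{H^{\eta,\xi}(\Dom;H)}$ through Sobolev embeddings on each factor, for a suitable choice $(r_1,r_2)$ with $\tfrac{1}{r_1}+\tfrac{1}{r_2}=\tfrac{1}{q}$ and a small $\varepsilon>0$.

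Concretely, I want $r_1\in(1,\infty)$ such that both
\[
H^{\eta,\xi}(\Dom;H)\hookrightarrow H^{s,r_1}(\Dom;H) \qquad\text{and}\qquad H^{s-\varepsilon,q}(\Dom)\hookrightarrow L^{r_2}(\Dom)
\]
hold. The first Sobolev embedding requires $r_1\geq \xi$ and $\eta-\tfrac{d}{\xi}\geq s-\tfrac{d}{r_1}$, i.e.\ $\tfrac{1}{r_1}\geq\tfrac{1}{d}(s-\eta+\tfrac{d}{\xi})$. The second (for $s-\varepsilon>0$ and $r_2\in[q,\infty)$) requires $\tfrac{1}{r_2}\geq \tfrac{1}{q}-\tfrac{s-\varepsilon}{d}$, equivalently $\tfrac{1}{r_1}\leq \tfrac{s-\varepsilon}{d}$. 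Because $\xi\leq q$, any $r_1\geq q$ automatically satisfies $r_1\geq\xi$, so the only real constraint is that $\tfrac{1}{r_1}$ lies in the interval
\[
\Big[\,\max\Big(0,\tfrac{s-\eta+d/\xi}{d}\Big),\;\min\Big(\tfrac{1}{q},\tfrac{s-\varepsilon}{d}\Big)\,\Big].
\]

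The main check—and the only place the hypotheses come in—is that this interval is non-degenerate for some $\varepsilon>0$. The assumption $\eta-\tfrac{d}{\xi}\geq s-\tfrac{d}{q}$ is exactly what yields $\tfrac{s-\eta+d/\xi}{d}\leq\tfrac{1}{q}$, and the strict inequality $\eta>\tfrac{d}{\xi}$ provides the headroom for the $\varepsilon$-loss: any $\varepsilon\in(0,\min(s,\eta-\tfrac{d}{\xi}))$ (which is a non-empty interval since $s>0$ and $\eta-\tfrac{d}{\xi}>0$) makes $s-\varepsilon>0$ and $\tfrac{s-\eta+d/\xi}{d}<\tfrac{s-\varepsilon}{d}$. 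Fixing such an $\varepsilon$ and any $1/r_1$ strictly inside the interval (strictly positive so that $r_1$ stays finite) closes the argument via the two Sobolev embeddings combined with Proposition~\ref{prop:multiplication_negative}\eqref{it:multiplication_negative1}. I do not foresee a substantive obstacle: the argument is routine Sobolev-exponent bookkeeping, the only mild care being the endpoint case $\eta-\tfrac{d}{\xi}\geq s$, in which the lower bound on $1/r_1$ collapses to $0$ and one must pick a strictly positive value.
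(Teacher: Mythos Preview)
Your proposal is correct and follows essentially the same route as the paper: apply Proposition~\ref{prop:multiplication_negative}\eqref{it:multiplication_negative1} with the first pair $(q,\infty)$ and then choose the second pair $(r_1,r_2)$ so that the two Sobolev embeddings $H^{\eta,\xi}\hookrightarrow H^{s,r_1}$ and $H^{s-\varepsilon,q}\hookrightarrow L^{r_2}$ hold. The paper organizes the parameter choice via an explicit case split on the sign of $s-\tfrac{d}{q}$ (taking $r_2=\infty$ when $s>\tfrac{d}{q}$), whereas you package the same bookkeeping as an interval condition on $1/r_1$; just note that in the borderline case $\eta-\tfrac{d}{\xi}=s-\tfrac{d}{q}>0$ your interval degenerates to $\{1/q\}$, forcing $r_2=\infty$, so the phrase ``strictly inside'' should be relaxed to ``in the interval and strictly positive''.
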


\begin{proof}
As before it suffices to consider $\Dom = \R^d$.
Due to Proposition \ref{prop:multiplication_negative}\eqref{it:multiplication_negative1}
for all $\zeta\in (1,\infty)$ and $\varrho\in (1,\infty]$ such that $\frac{1}{q}=\frac{1}{\zeta}+\frac{1}{\varrho}$ one has
\begin{equation*}
\begin{aligned}
\|f g\|_{H^{s,q}(\R^d;H)}
\lesssim
\|f\|_{H^{s,q}}\|g\|_{L^{\infty}(\R^d;H)} +
\|f\|_{L^{\varrho}}\|g\|_{H^{s,\zeta}(\R^d;H)} .
\end{aligned}
\end{equation*}
Therefore, to prove the claim it remains to show that
\begin{equation}
\label{eq:embeddings_estimate_step_7}
H^{\eta,\xi}(\R^d;H) \hookrightarrow H^{s,\zeta}(\R^d;H)
\ \ \ \text{ and }\ \ \
H^{s-\varepsilon,q} \hookrightarrow L^{\varrho} \ \ \text{ for some }\varepsilon>0
\end{equation}
where $\varrho,\zeta$ will be suitably chosen. To prove \eqref{eq:embeddings_estimate_step_7} we apply Sobolev embedding and split into two cases.

\emph{Case $s-\frac{d}{q}>0$.} One can check that the choice $\varrho=\infty$, $\zeta=q$ and the assumption $\eta-\frac{d}{\xi}\geq s-\frac{d}{q}$ yield \eqref{eq:embeddings_estimate_step_7} for all $\varepsilon\in (0,s-\frac{d}{q})$.

\emph{Case $s-\frac{d}{q}\leq  0$}. Fix $\varepsilon\in (0,\min\{s,\eta-\frac{d}{\xi}\})$. Let $\varrho\in (q,\infty)$ be such that $s-\varepsilon-\frac{d}{q}=-\frac{d}{\varrho}$ (i.e.\ $s-\frac{d}{\zeta}=\varepsilon$). Thus the second embedding in \eqref{eq:embeddings_estimate_step_7} holds. To obtain the first one in \eqref{eq:embeddings_estimate_step_7} it is enough to note that $\eta-\frac{d}{\xi}>\varepsilon= s-\frac{d}{\zeta}$ and $\xi\leq q<\zeta$.
\end{proof}

\section{Main result}\label{sec:SMRsecond}

Below we gives sufficient conditions for stochastic maximal regularity for systems of second order operators on the torus $\T^d = [-1/2,1/2]^d$. It is the first result on $L^p((0,T)\times\Omega,H^{\sigma,q}(\Tor^d))$-theory with coefficients which are measurable in time and $\Omega$, and H\"older continuous in space. The flexibility to have $p,q\geq 2$ and $\sigma\in \R$ arbitrary, is highly relevant for the applications to nonlinear SPDEs (see \cite{AV19_QSEE_1,AV19_QSEE_2}).

\subsection{Assumptions}
Consider the following linear parabolic system on $\Tor^d$:
\begin{equation}
\label{eq:parabolic_problem}
\begin{cases}
\dd u +\A\, u \, \dd t= f \, \dd t + \sum_{n\geq 1} (\b_n u+g_n) \, \dd w^n_t,&\text{ on }\Tor^d,\\
u(s)=u_s, &\text{ on }\Tor^d,
\end{cases}
\end{equation}
where $s\in [0,T)$, $m,d\in \N$ and for each sufficiently smooth $u=(u_{k})_{k=1}^m$,
\begin{equation}
\label{eq:def_AB_parabolic}
\begin{aligned}
\A(t) u&:=-\sum_{i,j=1}^d \partial_i (\am^{i,j}(t,\cdot)\partial_ju)  \  \ \text{or}  \ \
\wt{\A}(t) u:=-\sum_{i,j=1}^d \am^{i,j}(t,\cdot)\partial_i \partial_ju,
\\ \b_n(t) u&:=\Big(\sum_{j=1}^d \bm^{j}_{k,n}(t,\cdot)\partial_ju_{k}\Big)_{k=1}^m
\end{aligned}
\end{equation}
for all $n\geq 1$, $t\in (s,T)$. Lower order terms can be added without difficulty afterwards by the perturbation results of Theorem \ref{t:pertubation}. Moreover, using a minor modification of \eqref{it4:parabolic_problems} below, complex coefficient can be allowed as well (see \cite[Assumption 5.2(2)]{VP18}). A natural question is whether the diagonal structure on the operator $\b$ can be relaxed. In general this is not the case as follows from a counterexample in \cite{KimLeesystems}. Finally, $(w_t^n:t\geq 0)_{n\geq 1}$ denotes a sequence of standard Brownian motions on a filtered probability $(\O,\mathscr{A},(\F_t)_{t\geq 0},\P)$. By \cite[Example 2.12]{AV19_QSEE_1} this naturally determines a $W_{\ell^2}$-cylindrical Brownian motion.

Consider the following assumption in the case of a divergence form operators.

\begin{assumption} Let $m,d\in\N$, $\sigma \in \R$ and $0\leq s<T<\infty$.
\label{ass:parabolic_problems}
\begin{enumerate}[{\rm(1)}]
\item Let one of the following be satisfied:
\begin{itemize}
\item $q\in [2,\infty)$, $p\in (2,\infty)$ and $\a\in [0,\frac{p}{2}-1)$;
\item $p=q=2$ and $\a=0$.
\end{itemize}
\item\label{it:parabolic_problems1} For each $n\geq 1$, $i,j\in \{1,\dots,d\}$ and $k\in \{1,\dots,m\}$, the maps $\am^{i,j}:=(\am^{i,j}_{k,h})_{k,h=1}^m:[0,T]\times \O\times \Tor^d\to \R^{m\times m}$, $\bm^{j}_{k,n}:[0,T]\times \O\times \Tor^d\to \R$ are $\Progress\otimes \Borel(\Tor^d)$-measurable.
\item\label{it:parabolic_problems2} There exist $C_{\am,\bm}>0$, $\alpha>|1+\sigma|$ such that a.s.\ for all $t\in (s,T)$, $i,j\in \{1,\dots,d\}$, $k\in \{1,\dots,m\}$,
\begin{align*}
\|\am^{i,j}(t,\cdot)\|_{C^{\alpha}(\Tor^d;\R^{m\times m})}+\|(\bm^{j}_{k,n}(t,\cdot))_{n\geq 1}\|_{C^{\alpha}(\Tor^d;\ell^2)}&\leq C_{\am,\bm}.
\end{align*}

\item\label{it4:parabolic_problems} Let $\Psi^{i,j}$ be the $m\times m$-dimensional diagonal matrix whose diagonal elements are given by $(\frac{1}{2}\sum_{n\geq 1} \bm^j_{\ell,n}\bm^i_{\ell,n})_{\ell=1}^m$. There exists $\vartheta>0$ such that a.s.\ for all $t\in [0,T]$, $x\in \Tor^d$, $\eta\in \R^m$ and $\xi\in \R^d$,
$$
\sum_{i,j=1}^d \big([(\am^{i,j}(t,x)-\Psi^{i,j}(t,x)) \eta]\cdot \eta\big) \xi_i \xi_j\geq \vartheta |\xi|^2|\eta|^2.
$$
\end{enumerate}
\end{assumption}
In the case $\A$ is in non-divergence form, the only required change is that instead of \eqref{it:parabolic_problems2} we assume

\let\ALTERWERTA\theenumi
\let\ALTERWERTB\labelenumi
\def\theenumi{(3)'}
\def\labelenumi{(3)'}
\begin{enumerate}
\itemindent=-13pt
\item\label{it:threeprime}
\emph{There exist $C_{\am,\bm}>0$, $\alpha>|1+\sigma|$ and $\beta>|\sigma|$ such that a.s.\ for all $t\in (s,T)$, $i,j\in \{1,\dots,d\}$, $\ell\in \{1,\dots,m\}$,
\begin{align*}
\|\am^{i,j}(t,\cdot)\|_{C^{\beta}(\Tor^d;\R^{m\times m})}+\|(\bm^{j}_{\ell,n}(t,\cdot))_{n
\geq 1}\|_{C^{\alpha}(\Tor^d;\ell^2)}&\leq C_{\am,\bm}.
\end{align*}
}
\end{enumerate}
\let\theenumi\ALTERWERTA
\let\labelenumi\ALTERWERTB

For notational convenience we write $H^{s,q}$ and $B^{s}_{q,p}$ instead of $H^{s,q}(\Tor^d;\R^m)$ and $B^{s}_{q,p}(\Tor^d;\R^m)$ below. Let
\begin{align}\label{eq:X0X1}
X_0 = H^{\sigma,q} \ \ \text{and} \ \ X_1 = H^{2+\sigma,q}.
\end{align}
Then \[X_{\theta} = H^{2\theta+\sigma} \ \  \text{for $\theta\in [0,1]$} \ \ \text{and} \  \  X_{\kappa,p}^{\Tr} = B^{2+\sigma-\frac{2(1+\kappa)}{p}}_{q,p}.\]
If $p=q=2$ and $\kappa=0$, and $X_{0,2}^{\Tr} = H^{1+\sigma,2}$.
For $(\A,\b)$ as in \eqref{eq:def_AB_parabolic}, we set
\begin{equation*}
\begin{aligned}
(A,B)&:[0,T]\times \O\to \calL(H^{2+\sigma,q},H^{\sigma,q}\times H^{1+\sigma,q}(\ell^2)),\\
(A,B)u&:=(\A u,(\b_n u)_{n\geq 1}),\qquad  u\in H^{2+\sigma,q},
\end{aligned}
\end{equation*}
and similarly for $(\wt{A},B)$. Here $H^{1+\sigma,q}(\ell^2):=H^{1+\sigma,q}(\T^d;\ell^2)$.
The couples $(A,B)$ and $(\wt{A},B)$ are well-defined and actually satisfy Assumption \ref{ass:AB_boundedness} with $C_{(A,B)}\leq C(q,p,\sigma,d,m)$.
Indeed, by Assumption \ref{ass:parabolic_problems} and Proposition \ref{prop:multiplication_negative}\eqref{it:multiplication_negative2},\eqref{it:multiplication_negative4} and \cite[Proposition 9.3.1]{Analysis2} it follows that for all $v\in H^{2+\sigma,q}$,
\begin{align*}
&\|\A v\|_{H^{\sigma,q}}+\|(\b_n v )_{n\geq 1}\|_{\g(\ell^2,H^{1+\sigma,q})}\\
&\lesssim
\max_{i,j}\| a^{i,j}\nabla v\|_{H^{1+\sigma,q}}+\|(\b_n v )_{n\geq 1}\|_{H^{1+\sigma,q}(\ell^2)}
\lesssim C_{a,b} \|v\|_{H^{2+\sigma,q}},
\end{align*}
where the spaces take values in $\R^m$. The required measurability follows from the progressively measurability of the coefficients $a^{i,j}$ and $b^j_{k,n}$, where for the $\gamma$-valued case one can use \cite[Lemma 2.5]{NVW1}.

\subsection{Statements of the main results}
Next we state our main results on stochastic maximal regularity for these operators. Equivalently we could just write $(A,B)\in \MRtas$ (see Definition \ref{def:SMRgeneralized}) below, and write the estimates by referring to the constants $K^{\theta}_{(A,B)}$ in \eqref{eq:constKellthetapa}. However, to make the theorems more accessible we write out the definitions. Note that Theorem \ref{t:parabolic_problems-intro} is a special case of the result below.
\begin{theorem}[Divergence form]
\label{t:parabolic_problems}
Suppose that Assumption \ref{ass:parabolic_problems} holds. Then for each $s\in [0,T)$, each $\F_s$-measurable $u_s\in L^p(\Omega;B^{2+\sigma-2(1+\kappa)/p}_{q,p})$, each progressively measurable $f\in  L^p(  (s ,T)\times \Omega,w_{\a}^{s};H^{\sigma,q})$, and
$g\in L^p( (s ,T)\times \Omega,w_{\a}^s;H^{1+\sigma,q}(\ell^2))$ there exists a unique strong solution $u$ to \eqref{eq:parabolic_problem}. Moreover, letting
\begin{align*}
  J_{p,q,\kappa}(u_s,f,g):=  \|u_s\|_{L^p(\Omega;B^{2+\sigma-2(1+\kappa)/p}_{q,p})}  +
   \|f\|_{L^p( (s ,T)\times \Omega,w_{\a}^{s};H^{\sigma,q})}
   +\|g\|_{L^p( (s ,T)\times \Omega,w_{\a}^{s};H^{1+\sigma,q}(\ell^2))},
\end{align*}
the following results hold, where the constants $C_i$ do not depend on $(s,u_s,f,g)$:
\begin{enumerate}[{\rm(1)}]
\item in case $p\in (2, \infty)$, $q\in [2, \infty)$, $\theta\in [0,\frac12)$,
\begin{align*}
\|u\|_{L^p((s,T)\times \Omega,w_{\a}^{s};H^{2+\sigma,q})} &\leq C_1 J_{p,q,\kappa}(u_s,f,g),
\\ \|u\|_{L^p(\O;\hz^{\theta,p}(s,T,w_{\a}^{s};H^{2-2\theta+\sigma,q}))} &\leq C_{2}(\theta) J_{p,q,\kappa}(u_s,f,g), \ \ \theta\in [0,1/2),
\\ \|u\|_{L^p(\O;C([s,T];B^{2+\sigma-2(1+\kappa)/p}_{q,p}))}
&\leq C_3 J_{p,q,\kappa}(u_s,f,g),
\\ \|u\|_{L^p(\O;C([s+\varepsilon,T];B^{2+\sigma-2/p}_{q,p}))}
&\leq C_{4}(\varepsilon) J_{p,q,\kappa}(u_s,f, g), \ \ \varepsilon\in (s,T).
\end{align*}
\item in case $p=q=2$ (and thus $\a=0$),
\begin{align*}
\|u\|_{L^2(\O;L^2(s,T;H^{2+\sigma,2}))} &\leq C_5 J_{2,2,0}(u_s,f,g),
\\ \|u\|_{L^2(\O;C([s,T];H^{1+\sigma,2}))} &\leq C_6 J_{2,2,0}(u_s,f,g).
\end{align*}
\end{enumerate}
\end{theorem}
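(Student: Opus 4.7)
All the estimates asserted in the theorem will follow once $(A,B) \in \MRtas$ is established with $X_0 = H^{\sigma,q}$, $X_1 = H^{2+\sigma,q}$ (Definition \ref{def:SMRgeneralized}), combined with the nonzero-initial-data extension of \cite[Proposition 3.10]{AV19_QSEE_1}, the trace embedding into $B^{2+\sigma-2(1+\kappa)/p}_{q,p}$, and, for the regularization on $[s+\varepsilon,T]$, \cite[Theorem 1.2]{ALV20}. The problem therefore reduces entirely to proving SMR membership.

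\textbf{Step 1: $x$-independent baseline.} First I would handle the case where $\am^{i,j}, \bm^j_{k,n}$ depend only on $(t,\omega)$. Then $\A$ and $\b_n$ are Fourier multipliers on $\T^d$, and the stochastic parabolicity condition in Assumption \ref{ass:parabolic_problems}\eqref{it4:parabolic_problems} makes the effective leading symbol uniformly elliptic. A symmetrization reduces $\A$ up to a bounded perturbation to a positive-definite constant-coefficient self-adjoint operator with bounded $H^\infty$-calculus of angle strictly less than $\pi/2$ on $H^{\sigma,q}$, uniformly in $(t,\omega)$. Theorem \ref{t:SMR_H_infinite} then gives SMR for the deterministic part, and the linear noise operator $B$ is absorbed via Theorem \ref{t:pertubation}. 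This reproduces \cite[Theorem 5.3]{VP18}.

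\textbf{Step 2: Spatial freezing and perturbation.} For the full $x$-dependent setting, I would run a spatial partition-of-unity argument. Fix $\delta > 0$ small and a smooth partition of unity $(\phi_\ell)_{\ell=1}^N$ on $\T^d$ with $\supp \phi_\ell \subset B(x_\ell,\delta)$. Let $A^{(\ell)}, B^{(\ell)}$ be the operators obtained by freezing the coefficients at $x_\ell$; by Step 1 they lie in $\MRtas$ uniformly in $\ell$. Define modified coefficients $\widetilde\am^{(\ell),i,j}$ that equal $\am^{i,j}$ on $B(x_\ell,2\delta)$ and transition smoothly to the constant $\am^{i,j}(x_\ell)$ outside a slightly larger ball; by H\"older continuity, $\widetilde\am^{(\ell),i,j} - \am^{i,j}(x_\ell)$ has $C^{\alpha-\varepsilon}$-norm of order $\delta^{\alpha-\varepsilon}$ for any small $\varepsilon>0$. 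Proposition \ref{prop:multiplication_negative} (items \eqref{it:multiplication_negative2} and \eqref{it:multiplication_negative4}) combined with Corollary \ref{cor:multiplication_Sobolev} then show that multiplication by this coefficient difference is a small-norm operator $X_1 \to X_0$ and $X_1 \to \gamma(\ell^2,X_{1/2})$ plus a lower-order term, which is exactly the structural hypothesis of the perturbation Theorem \ref{t:pertubation}. Choosing $\delta$ small and, if necessary, subdividing the time interval via Proposition \ref{prop:partitioninterval} to reach the smallness threshold of Theorem \ref{t:pertubation}, one obtains $(\widetilde A^{(\ell)}, \widetilde B^{(\ell)}) \in \MRtas$. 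Since this pair coincides with $(A,B)$ on $\supp\phi_\ell$, solving locally with data $(\phi_\ell f, \phi_\ell g)$ and summing $u := \sum_\ell \phi_\ell u_\ell$ yields a candidate global strong solution. The commutator terms $[\phi_\ell, \widetilde A^{(\ell)}]$ and $[\phi_\ell, \widetilde B^{(\ell)}]$ produced in this patching are of strictly lower differential order, hence handled by Proposition \ref{prop:multiplication_negative} and a final application of Theorem \ref{t:pertubation}; uniqueness follows from the a priori estimate applied to the difference of two solutions.

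\textbf{Principal obstacle.} The crux is that the $L^p(L^q)$ setting with $p \neq q$ obstructs the Fubini-based reduction available when $p = q$: one cannot freeze in time and reduce to a deterministic spatial problem via measurable selection. The purely spatial freezing above circumvents this but demands sharp multiplication estimates in Bessel potential spaces of potentially negative smoothness, which is precisely what Proposition \ref{prop:multiplication_negative}\eqref{it:multiplication_negative3}--\eqref{it:multiplication_negative4} provides via paraproduct/duality arguments; the strict inequality $\alpha > |1+\sigma|$ appears exactly to guarantee that $C^\alpha$-coefficients act by pointwise multiplication on $H^{1+\sigma,q}$ and its dual, with an $\varepsilon > 0$ to spare so that the lower-order term genuinely sits in a weaker space. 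The non-divergence version Theorem \ref{t:parabolic_problems_non} is handled by the same scheme using hypothesis \ref{it:threeprime} in place of Assumption \ref{ass:parabolic_problems}\eqref{it:parabolic_problems2}.
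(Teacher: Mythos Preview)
Your overall strategy---reduce to showing $(A,B)\in\MRtas$, handle $x$-independent coefficients via \cite[Theorem 5.3]{VP18}, then freeze spatially and invoke the perturbation Theorem~\ref{t:pertubation}---matches the paper's. The gap is in how you assemble the local pieces. You propose to solve the localized problems with data $(\phi_\ell f,\phi_\ell g)$, obtaining $u_\ell$, and then set $u:=\sum_\ell\phi_\ell u_\ell$. This does not produce a solution of the original equation: the $u_\ell$ are global functions on $\T^d$, not supported in $\supp\phi_\ell$, so after multiplying by $\phi_\ell$ and summing you generate $\sum_\ell\phi_\ell^2 f\neq f$ on the right-hand side, together with commutator terms $[\widetilde A^{(\ell)},\phi_\ell]u_\ell$ that depend on the \emph{local} solutions $u_\ell$ rather than on $u$. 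These are coupling terms between patches, not a perturbation of a fixed operator pair, so a ``final application of Theorem~\ref{t:pertubation}'' does not apply. (A minor related imprecision: the $C^{\alpha-\varepsilon}$-norm of $\widetilde a^{(\ell)}-a(x_\ell)$ is \emph{not} of order $\delta^{\alpha-\varepsilon}$; only its $L^\infty$-norm is small. What is true, and what you correctly state next, is that the resulting multiplication operator splits into a small top-order piece plus an $O(1)$ lower-order piece, which is what Theorem~\ref{t:pertubation} needs.)

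The paper runs the argument in the opposite direction: it first proves an \emph{a priori} estimate (Lemma~\ref{l:estimates_small_interval}), and then obtains existence by the method of continuity interpolating between $(-\Delta,0)$ and $(A,B)$. Concretely, given any strong solution $u$, one sets $u_\lambda:=\phi_\lambda u$; since the extended operators $(\A_\lambda^\e,\b_\lambda^\e)$ agree with $(\A,\b)$ on $\supp\phi_\lambda$, each $u_\lambda$ solves the $\lambda$-th local equation with extra forcing $[\A,\phi_\lambda]u$ and $([\phi_\lambda,\b_n]u)_{n\geq1}$---which depend on $u$, not on any auxiliary $u_\ell$. Causality (Proposition~\ref{prop:causality}\eqref{it:causality}) then yields the representation $u=\sum_\lambda\Sol_\lambda(0,[\A,\phi_\lambda]u+f_\lambda,\ldots)$; the commutators are genuinely lower order by Proposition~\ref{prop:multiplication_negative}, and are absorbed via interpolation and smallness of the time interval. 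With the a priori estimate in hand, the method of continuity gives $(A,B)\in\MRtasz$, and transference from $-\Delta$ (Proposition~\ref{prop:transference}) upgrades this to $\MRtas$. Your proposal has all the ingredients; the missing idea is to use them for an a priori estimate plus method of continuity rather than attempting a direct patching construction.
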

From the proof it is clear that $C_i$ only depends on $q, p, \a, \sigma, d, m, \vartheta, C_{\am,\bm}, T$, and additionally on $\theta$ and $\varepsilon$ if this is written explicitly. One can use (weighted) Sobolev embedding to deduce H\"older regularity from the above theorem:
\[\hz^{\theta,p}(s,T;X)\hookrightarrow C^{\theta-\frac{1+\kappa}{p}}([s,T];X) \ \ \text{and} \ \ H^{\alpha,q}(\Dom)\hookrightarrow C^{\alpha-\frac{d}{q}}(\overline{\Dom})\]
if $\theta>\frac{1+\kappa}{p}$ (see \cite[Proposition 7.4]{MV12}), and if $\alpha-\frac{d}{q}\notin\N_0$, respectively.

The following variant holds in the non-divergence case:
\begin{theorem}[Non-divergence form]\label{t:parabolic_problems_non}
Suppose that Assumption \ref{ass:parabolic_problems} with \eqref{it:parabolic_problems2} replaced by \emph{\ref{it:threeprime}}. Then for each $s\in [0,T)$ the same assertions as in Theorem \ref{t:parabolic_problems} hold if $(\A,\b)$ is replaced by $(\wt{\A},\b)$.
\end{theorem}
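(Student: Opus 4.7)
The proof runs parallel to that of Theorem \ref{t:parabolic_problems}, modified to the non-divergence form. Set $X_0=H^{\sigma,q}$ and $X_1=H^{2+\sigma,q}$ as in \eqref{eq:X0X1}, and put $\wt{A}u:=\wt{\A}u$, $Bu:=(\b_n u)_{n\geq 1}$. The target is $(\wt{A},B)\in\MRtas$, from which all stated estimates follow by Definition \ref{def:SMRgeneralized} and \cite[Proposition 3.10]{AV19_QSEE_1}. Verifying Assumption \ref{ass:AB_boundedness} for $(\wt{A},B)$ is the first task: writing $\wt{\A}u=-\sum_{i,j}\am^{i,j}\partial_i\partial_j u$, the pointwise multiplier estimates of Proposition \ref{prop:multiplication_negative}\eqref{it:multiplication_negative2} (for $\sigma\geq 0$) and \eqref{it:multiplication_negative4} (for $\sigma<0$) give $\|\wt{\A}u\|_{H^{\sigma,q}}\lesssim C_{\am,\bm}\|u\|_{H^{2+\sigma,q}}$, and it is precisely here that the weaker hypothesis $\beta>|\sigma|$ from \emph{\ref{it:threeprime}} is used (compared with $\alpha>|1+\sigma|$ in the divergence case, reflecting the gain of one derivative). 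The $B$-estimate is unchanged from the divergence case.

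Next, I freeze coefficients and localize. By \cite[Theorem 5.3]{VP18}, the $(t,\omega)$-dependent but $x$-independent pair obtained by evaluating $\am^{i,j}$ at any fixed point $x_0\in\Tor^d$ lies in $\MRtas$, with constants controlled by $\vartheta$ and $C_{\am,\bm}$; Proposition \ref{prop:transference}, together with a reference operator such as $-\Delta\cdot I_m$ whose $H^\infty$-calculus yields $\MRtas$ via Theorem \ref{t:SMR_H_infinite}, reduces the task to the weaker assertion $(\wt{A},B)\in\MRtasz$. Cover $\Tor^d$ by balls $B(x_k,r)$ with subordinate partition of unity $(\varphi_k)_{k=1}^N$, and on each ball write $\wt{\A}u=\wt{\A}_{x_k}u+R_k u$ where
\begin{equation*}
R_k u:=-\sum_{i,j}[\am^{i,j}(t,\cdot)-\am^{i,j}(t,x_k)]\partial_i\partial_j u,
\end{equation*}
with an analogous splitting of $B$. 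Multiplying the coefficient difference by a cutoff supported near $x_k$ and applying Corollary \ref{cor:multiplication_Sobolev} (or Proposition \ref{prop:multiplication_negative}\eqref{it:multiplication_negative4}), one obtains
\begin{equation*}
\|R_k u\|_{H^{\sigma,q}}\leq \eta(r)\|u\|_{H^{2+\sigma,q}}+C(r)\|u\|_{H^{2+\sigma-\varepsilon,q}}
\end{equation*}
for some $\varepsilon>0$ and with $\eta(r)\to 0$ as $r\downarrow 0$, thanks to H\"older continuity of $\am^{i,j}$ with exponent $\beta>|\sigma|$. For $r$ small enough the smallness hypothesis of Theorem \ref{t:pertubation} is satisfied, delivering maximal regularity for the localized problem. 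A partition-of-unity argument, in which commutator terms from derivatives hitting $\varphi_k$ are of lower order and fall into the $L_A,L_B$ slots, then combines with Proposition \ref{prop:partitioninterval} to patch the local pieces into the global estimate on $(s,T)\times\Tor^d$.

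The main obstacle is the localization step. Because $p\neq q$ in general, one cannot use Fubini to reduce to an $x$-pointwise frozen-coefficient problem as in the Hilbert case $p=q=2$; this is precisely the difficulty flagged in the introduction. The plan relies on the two key new tools of the paper: the perturbation theorem \ref{t:pertubation}, whose careful separation between the leading constants $C_A,C_B$ (which must be small) and the lower-order constants $L_A,L_B$ (which may be large) is exactly what allows one to absorb the full coefficient variation together with any commutators from the partition of unity; and the multiplier estimate Proposition \ref{prop:multiplication_negative}\eqref{it:multiplication_negative4}, whose $\varepsilon$-loss in smoothness on the coefficient-difference term routes it into the lower-order slot. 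Getting the accounting of smoothness exponents right so that $\beta>|\sigma|$ is sufficient in the non-divergence case is the essential bookkeeping beyond the divergence case; the stochastic parabolicity condition (4) of Assumption \ref{ass:parabolic_problems} is what makes each frozen operator elliptic enough for the constant-coefficient theory to apply uniformly in $k$.
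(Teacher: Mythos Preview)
Your strategy matches the paper's: verify Assumption \ref{ass:AB_boundedness}, reduce via transference to $(\wt{A},B)\in\MRtasz$, freeze coefficients and invoke \cite[Theorem 5.3]{VP18}, then localize and perturb. Two points need correction, however.

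First, the commutators $[\wt{\A},\varphi_k]u$ and $[\varphi_k,\b_n]u$ cannot be placed in the $L_A,L_B$ slots of Theorem \ref{t:pertubation}: those slots accommodate perturbations of the operator acting on the unknown $u_k=\varphi_k u$ of the \emph{localized} problem, whereas the commutators involve the global $u$ and enter the equation for $u_k$ as \emph{inhomogeneities}, not as operator perturbations. In the paper (Steps 4--5 of Lemma \ref{l:estimates_small_interval}) one uses the representation $u_k=\Sol_k(0,[\wt{\A},\varphi_k]u+f_k,\dots)$, bounds the commutators by $\|u\|_{L^p(w_\ell;X_{1-\varepsilon/2})}$, interpolates, and absorbs the resulting $\|u\|_{L^p(w_\ell;X_0)}$ by taking the time interval short enough via Proposition \ref{prop:causality}\eqref{it:smallness_C_T}. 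This small-time absorption is the actual reason Proposition \ref{prop:partitioninterval} is invoked.

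Second, the localization argument just described yields only an a priori estimate (Lemma \ref{l:estimates_small_interval}), not existence of a solution. The paper closes the loop with the method of continuity \cite[Proposition 3.13]{AV19_QSEE_2}, deforming along $\lambda\mapsto(\lambda\wt{A}+(1-\lambda)(-\Delta),\lambda B)$ and using that the a priori estimate is uniform in $\lambda$; this step is missing from your sketch.
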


In principle our proof is self-contained, but we do rely on the results for constant coefficients in space \cite[Theorem 5.3]{VP18}, which also holds in the case $\R^d$ is replaced by $\T^d$ as is clear from the proof.
Besides the fact that Theorems \ref{t:parabolic_problems} and \ref{t:parabolic_problems_non} are more general, the proof below is also simpler in the sense that we can completely avoid the case $\theta\neq 0$ in \cite[Lemmas 5.5-5.7 and step 2 of Theorem 5.4]{VP18} which was needed for the optimal mixed space-time smoothness. Instead we rely on the transference result of Proposition \ref{prop:transference}.

A key step in the proof of Theorem \ref{t:parabolic_problems} is the following a priori estimate on small time intervals.
\begin{lemma}
\label{l:estimates_small_interval}
Let Assumption \ref{ass:parabolic_problems} be satisfied.
Then there exist $\tT,C>0$ depending only on $q, p, \a, \sigma, d, m, \vartheta, C_{\am,\bm}$ for which the following holds:

For any $t\in [s,T]$, $\ell\in \{0,\a\}$, any stopping time $\tau:\O\to [t,T\wedge(t+\tT)]$, any $f\in L^p_{\Progress}((t,\tau)\times\Omega,w_{\ell}^t;X_0)$, $g\in L^p_{\Progress}((t,\tau)\times\Omega,w_{\ell}^t;\g(H,X_{1/2}))$ and any strong solution $u\in L^p_{\Progress}((t,\tau)\times\Omega,w_{\ell}^t;X_1)$ to \eqref{eq:parabolic_problem} on $[t,\tau]$ with $u(t)=0$ one has
\begin{align}\label{eq:lemaprioriTorus}
\|u\|_{L^p((t,\tau)\times\Omega,w_{\ell}^t;X_1)}\leq C (\|f\|_{L^p((t,\tau)\times\Omega,w_{\ell}^t;X_0)}+\|g\|_{L^p((t,\tau)\times\Omega,w_{\ell}^t;\g(H,X_{1/2}))}).
\end{align}
The same assertion holds in the non-divergence case if Assumption \ref{ass:parabolic_problems}\eqref{it:parabolic_problems2} is replaced by \emph{\ref{it:threeprime}}.
\end{lemma}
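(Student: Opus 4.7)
The plan is a spatial localization argument built on three inputs: stochastic maximal $L^p$-regularity for $(t,\omega)$-dependent but $x$-independent coefficients from \cite[Theorem 5.3]{VP18} (its proof extends verbatim from $\R^d$ to $\Tor^d$), the pointwise multiplication results of Proposition \ref{prop:multiplication_negative} and Corollary \ref{cor:multiplication_Sobolev}, and the smallness-in-time absorption of Proposition \ref{prop:causality}\eqref{it:smallness_C_T}. Since a single-point freezing $A-A_{x_0}$ is not small on $\Tor^d$, I introduce a scale $\delta>0$ (to be chosen from the listed parameters only), a finite cover $\{B(x_j,\delta)\}_{j=1}^{N}$ of $\Tor^d$ with $N\lesssim \delta^{-d}$, and a smooth partition of unity $\{\chi_j\}$ subordinate to it, with $\sum_j\chi_j\equiv 1$ and $\|\partial^{k}\chi_j\|_{\infty}\lesssim \delta^{-k}$. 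For each $j$ the frozen operators $A_j^{0}(r)u:=-\sum_{i,k}\partial_i\bigl(\am^{ik}(r,\omega,x_j)\partial_k u\bigr)$ and the analogous $B_{n,j}^{0}$ satisfy the same stochastic parabolicity, and hence belong to $\mathcal{SMR}_{p,\kappa}^{\bullet}(t,T)$ with constants uniform in $j$.

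Next, $v_j:=\chi_j u$ solves an equation of the form $dv_j+A_j^{0}v_j\,dr = F_j\,dr+\sum_n(B_{n,j}^{0}v_j+G_{n,j})\,dw^n_r$ with $v_j(t)=0$, where $F_j = \chi_j f + [A_j^{0},\chi_j]u + \chi_j(A_j^{0}-A)u$ and analogously for $G_{n,j}$. The commutators $[A_j^{0},\chi_j]u$ and $[B_{n,j}^{0},\chi_j]u$ are at most first-order in $u$ with $\delta^{-O(1)}$ coefficients, so they produce lower-order contributions $\lesssim C(\delta)\|u\|_{X_{1-\varepsilon}}$. The core term is $\chi_j(A_j^{0}-A)u$, whose leading part is $\sum_{ik}\partial_i(\phi_j^{ik}\partial_k u)$ with $\phi_j^{ik}:=\chi_j\bigl(\am^{ik}(\cdot,x_j)-\am^{ik}(\cdot,\cdot)\bigr)$; the H\"older assumption gives $\|\phi_j^{ik}\|_{L^{\infty}}\lesssim C_{\am,\bm}\delta^{\alpha}$, while Leibniz keeps all relevant H\"older/Bessel seminorms of $\phi_j^{ik}$ bounded uniformly in $\delta$ (the $\delta^{-k}$ blow-up of derivatives of $\chi_j$ is compensated by the $\delta^{\alpha}$-smallness of $\am^{ik}-\am^{ik}(x_j)$ on $\supp\chi_j$). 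Applying Proposition \ref{prop:multiplication_negative}\eqref{it:multiplication_negative2} when $1+\sigma>0$ and \eqref{it:multiplication_negative4} when $1+\sigma<0$ (together with the $\ell^{2}$-valued analogue for the $B$-term), the hypothesis $\alpha>|1+\sigma|$ yields
\[
\|\chi_j(A_j^{0}-A)u\|_{X_0}+\|\chi_j(B-B_j^{0})u\|_{\g(\ell^2,X_{1/2})}\le C_1\delta^{\beta}\|u\|_{X_1}+C_2\|u\|_{X_{1-\varepsilon}}
\]
for some $\beta,\varepsilon>0$ depending only on the listed parameters. Plugging this into the SMR estimate for $(A_j^{0},B_{n,j}^{0})$, taking $p$-th powers and summing over $j$, and using the pseudolocal equivalence $\|u\|_{H^{s,q}(\Tor^d)}^{q}\lesssim \sum_j\|\chi_j u\|_{H^{s,q}(\Tor^d)}^{q}+\|u\|_{H^{s-\varepsilon',q}}^{q}$ together with a finite-dimensional $\ell^{q}$-vs-$\ell^{p}$ comparison, I arrive at an inequality of the form $\|u\|_{L^p((t,\tau)\times\Omega,w_{\ell}^{t};X_1)}^{p}\le C(\delta)\bigl[\|f\|^{p}+\|g\|^{p}+\delta^{\beta p}\|u\|_{\cdots;X_1}^{p}+C'(\delta)\|u\|_{\cdots;X_{1-\varepsilon}}^{p}\bigr]$. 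Fixing $\delta$ so that $C(\delta)(C_1\delta^{\beta})^{p}<\tfrac12$ absorbs the first $\|u\|_{X_1}^{p}$-term; the $\|u\|_{X_{1-\varepsilon}}$-term is split via $\|u\|_{X_{1-\varepsilon}}\le \eta\|u\|_{X_1}+C(\eta)\|u\|_{X_0}$, and the resulting $\|u\|_{L^p(\cdots;X_0)}^{p}$-contribution is absorbed for small $\tT$ by Proposition \ref{prop:causality}\eqref{it:smallness_C_T} applied to the frozen SMR couples. The non-divergence case is identical, expanding $\am^{ik}\partial_i\partial_k u$ directly and using the stronger regularity of hypothesis \emph{\ref{it:threeprime}}.

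The main technical obstacle is the interaction between the partition-of-unity bookkeeping and the $L^{p}(L^{q})$ structure when $p\neq q$: the natural spatial equivalence is $q$-homogeneous while the stochastic norm is $p$-homogeneous, so the $\ell^{q}\hookrightarrow\ell^{p}$ comparison introduces a factor $N^{p/q-1}\sim \delta^{-d(p/q-1)}$ that must be balanced against the $\delta^{\beta p}$-smallness coming from H\"older continuity. What makes this balance close is precisely the $\varepsilon$-gain in the multiplication estimates of Proposition \ref{prop:multiplication_negative}\eqref{it:multiplication_negative3}--\eqref{it:multiplication_negative4} and Corollary \ref{cor:multiplication_Sobolev}: it allows one to trade a small amount of smoothness on $u$ for a Sobolev norm on $\phi_j^{ik}$ that is bounded independently of $\delta$, so that the sharp exponent is governed only by the H\"older assumption $\alpha>|1+\sigma|$ rather than a larger threshold such as $d/q$. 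Both $\delta$ and $\tT$ then depend only on $q,p,\a,\sigma,d,m,\vartheta,C_{\am,\bm}$, as required.
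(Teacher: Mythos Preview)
Your outline has a genuine gap in the absorption step, and the paper's argument is organized precisely to avoid it.

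In your scheme the localized equation for $v_j=\chi_j u$ carries a \emph{leading-order} error term $\chi_j(A_j^{0}-A)u$. After applying Proposition~\ref{prop:multiplication_negative} you correctly obtain
\[
\|\chi_j(A_j^{0}-A)u\|_{X_0}\le C_1\delta^{\beta}\|u\|_{X_1}+C_2\|u\|_{X_{1-\varepsilon}},
\]
but note that the $\varepsilon$-gain only affects the \emph{second} summand: it lets you pair a $\delta$-independent H\"older/Sobolev norm of $\phi_j^{ik}$ with a lower-order norm of $u$. The first summand is unaffected; its smallness is exactly $\|\phi_j^{ik}\|_{L^\infty}\lesssim\delta^{\beta}$ with $\beta\le\alpha$, and $\alpha$ is only assumed to exceed $|1+\sigma|$. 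When you sum the $N\sim\delta^{-d}$ localized SMR estimates (whether you use the naive triangle inequality producing $N^{p}$, or your $\ell^q$--$\ell^p$ comparison producing $N^{|p/q-1|}$), the coefficient in front of $\|u\|_{X_1}^p$ is of the form $C(\delta)\delta^{\beta p}$ with $C(\delta)$ a positive power of $\delta^{-1}$. Making this $<\tfrac12$ forces a lower bound on $\beta$ in terms of $d,p,q$ that is \emph{not} implied by $\alpha>|1+\sigma|$. Your final paragraph suggests the $\varepsilon$-gain closes this balance, but it cannot: the gain sits on the wrong term.

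The paper sidesteps this by never letting a leading-order error appear in the localized equation. Using the extension operators $\e_{y,r}^{\Tor^d}$ of Lemma~\ref{l:extension_operators}, it builds operators $(\A_\lambda^{\e},\b_\lambda^{\e})$ whose coefficients coincide with those of $(\A,\b)$ on $\supp\phi_\lambda$, so that $\A u_\lambda=\A_\lambda^{\e}u_\lambda$ and $\b_n u_\lambda=\b_{n,\lambda}^{\e}u_\lambda$ \emph{exactly}. The only errors in the representation \eqref{eq:localized_u_parabolic} are the commutators $[\A,\phi_\lambda]u$ and $[\phi_\lambda,\b_n]u$, which are genuinely lower order ($\lesssim\|u\|_{X_{1-\varepsilon/2}}$) and are absorbed by interpolation plus the smallness of $T^{*}$ alone. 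The smallness of the coefficient oscillation is used \emph{earlier and separately}, in Step~2, to show via the perturbation Theorem~\ref{t:pertubation} that each $(\A_\lambda^{\e},\b_\lambda^{\e})$ has SMR with constants independent of $\lambda$; this fixes the radius $r$ and the number $\Lambda$ of balls once and for all, so no competition between $\Lambda$ and a vanishing parameter ever arises. This decoupling---extension operator plus abstract perturbation, rather than freezing---is the missing idea.
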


We first show how Theorems \ref{t:parabolic_problems} and \ref{t:parabolic_problems_non} can be derived from the above lemma.

\begin{proof}[Proof of Theorems \ref{t:parabolic_problems} and \ref{t:parabolic_problems_non}]
All assertions would follow if we can show that  $(A,B)\in \mathcal{SMR}_{p,\a}^{\bullet}(0,T)$. Indeed, it suffices to consider zero initial data $u_s=0$ by \cite[Proposition 3.10]{AV19_QSEE_1}. In that case all assertions follow from the definitions except the maximal inequalities which are a direct consequence of $(A,B)\in \mathcal{SMR}_{p,\a}^{\bullet}(0,T)$ and the trace embedding in \cite[Theorem 1.2]{ALV20}.

By the periodic version of \cite[Theorem 10.2.25]{Analysis2}, $1-\Delta: H^{2+\sigma,q}\subseteq H^{\sigma,q}\to H^{\sigma,q}$ has a bounded $H^{\infty}$-calculus of angle $<\pi/2$. Hence Theorem \ref{t:SMR_H_infinite} implies that $-\Delta\in \MRta$ for all $T<\infty$. Therefore, by Proposition \ref{prop:transference} it is enough to prove that $(A,B)\in \mathcal{SMR}_{p,\a}(0,T)$.

By Proposition \ref{prop:partitioninterval} it suffices to prove $(A,B)\in \mathcal{SMR}_{p,\ell}(t,t+T^*)$ for all $t\in [0,T-T^*]$, where $T^*$ is as in Lemma \ref{l:estimates_small_interval} and $\ell\in \{0,\a\}$.
To show this we will apply the method of continuity (see \cite[Proposition 3.13]{AV19_QSEE_2}). Therefore, fix a stopping time $\tau:\O\to [t+t\wedge T^*]$ and let $u\in L^p_{\Progress}((t,\tau)\times\Omega,w_{\ell}^t;X_1)$ be a strong solution to
\eqref{eq:parabolic_problem} with $(A,B)$  replaced by
\[A_{\lambda} u = \lambda A u + (1-\lambda)(-\Delta) u, \ \ \text{and} \ \ B_{n,\lambda} u = \lambda B_n u,\]
where $\lambda\in [0,1]$ is fixed. By the above mentioned method of continuity with $X_0$ and $X_1$ as in \eqref{eq:X0X1}, it suffices to
prove the a priori estimate \eqref{eq:lemaprioriTorus}.
Since $(A_{\lambda},B_{\lambda})$ satisfies Assumption \ref{ass:parabolic_problems} (uniformly in $\lambda$), the required a priori estimate follows from Lemma \ref{l:estimates_small_interval}, and thus the theorem follows.
\end{proof}

In the next remark we discuss several situations in which the regularity conditions on the coefficients $a$ and $b$ can be weakened, and under which the assertions of Theorems \ref{t:parabolic_problems} and \ref{t:parabolic_problems_non} still remain valid.
\begin{remark}
\label{r:less_regularity_integer_delta_case}
In special integer cases of $\sigma$ one can check from  Steps 2 and 5 of the proof of Lemma \ref{l:estimates_small_interval} that the H\"older condition of Assumption \ref{ass:parabolic_problems}\eqref{it:parabolic_problems2} can be weakened. The proof always requires the existence of a $\modulus\in C([0,\infty))$ such that $\modulus(0)=0$ and for all $t\in [s,T]$, $x,x'\in \Tor^d$, $i,j\in \{1,\dots,d\}$, $k\in \{1,\dots,m\}$,
\[\|\am^{i,j}(t,x)- \am^{i,j}(t,x')\|_{\R^{m\times m} } + \|(\bm_{k,n}^j(t,x)-\bm_{k,n}^j(t,x'))_{n\geq 1}\|_{\ell^2}\leq \modulus(|x-x'|).\]
Below we indicate which additional assumption are needed in particular cases:
\begin{itemize}
\item divergence case, $\sigma=-1$: there exist $C_{\am,\bm}>0$, $\alpha>0$, a.s.\ for all $t\in [0,T]$, $i,j\in \{1,\dots,d\}$, $k\in \{1,\dots,m\}$,
\begin{align*}
\|\am^{i,j}(t,\cdot)\|_{C^{\alpha}(\Tor^d;\R^{m\times m})}+
 \|(\bm_{k,n}^{j}(t,x))_{n\geq 1}\|_{\ell^2}
 &\leq C_{\am,\bm}.
\end{align*}
\item divergence case, $\sigma=0$: there exist $C_{\am,\bm}>0$, a.s.\ for all $t\in [0,T]$, $x,x'\in \Tor^d$, $i,j,k\in \{1,\dots,d\}$, $k\in \{1,\dots,m\}$, $r\in \{0,1\}$,
\begin{align*}
 \|\partial_{k}^r \am^{i,j}(t,x)\|_{\R^{m\times m}} +  \|(\partial_{k}^r  \bm_{k,n}^{j}(t,x))_{n\geq 1}\|_{\ell^2}
 &\leq C_{\am,\bm}.
\end{align*}
\item non-divergence, $\sigma\in \N_0$: there exist $C_{\am,\bm}>0$, a.s.\ for all $t\in [0,T]$, $i,j,k\in \{1,\dots,d\}$, $k\in \{1,\dots,m\}$, $r\in \{0, \ldots, \sigma\}$,
\begin{align*}
 \|\partial_{k}^r \am^{i,j}(t,x)\|_{\R^{m\times m}} +  \|(\partial_{k}^r  \bm_{k,n}^{j}(t,x))_{n\geq 1}\|_{\ell^2}
 &\leq C_{\am,\bm}.
\end{align*}
\end{itemize}
\end{remark}

\begin{remark}\label{rem:lessregbc}
By Proposition \ref{prop:multiplication_negative}\eqref{it:multiplication_negative3} and Corollary \ref{cor:multiplication_Sobolev} one can check that Steps 2 and 5 of the proof of Lemma \ref{l:estimates_small_interval} still hold under the following regularity assumptions on $b_{k,n}^j$, which are weaker than Assumption \ref{ass:parabolic_problems}\eqref{it:parabolic_problems2}. The case $\sigma=-1$ was already discussed in Remark \ref{r:less_regularity_integer_delta_case} above.
\begin{itemize}
\item Case $\sigma<-1$: there exist $\xi\in [2,\infty)$, $\eta>-\sigma-1$ and $C_b>0$ such that $\eta-\frac{d}{\xi}>0$ and a.s.\ for all $t\in [0,T]$, $j\in \{1,\dots,d\}$ and $k\in \{1,\dots,m\}$,
$$
\|a^{i,j}\|_{H^{\eta,\xi}(\Tor^d;\R^{m\times m})}+
\|(b_{k,n}^j (t,\cdot))_{n\geq 1}\|_{H^{\eta,\xi}(\Tor^d;\ell^2)}\leq C_b.
$$
\item Case $\sigma>-1$: there exist $\xi\in (1,q]$, $\eta\geq 1+\sigma$ and $C_b>0$ such that $\eta-\frac{d}{\xi}\geq 1+\sigma-\frac{d}{q}$, and a.s.\ for all $t\in [0,T]$, $i,j\in \{1,\dots,d\}$, and $k\in \{1,\dots,m\}$,
$$
\|a^{i,j}\|_{H^{\eta,\xi}(\Tor^d;\R^{m\times m})}+
\|(b_{k,n}^j (t,\cdot))_{n\geq 1}\|_{H^{\eta,\xi}(\Tor^d;\ell^2)}\leq C_b.
$$
\end{itemize}
Note that in both cases, $H^{\eta,\xi}(\Tor^d;\ell^2)\embed C^{\varepsilon}(\Tor^d;\ell^2)$ by Sobolev embeddings for some $\varepsilon>0$. Moreover, if $\sigma>-1$, then the threshold case $q=\xi$ and $\eta=1+\sigma$ is allowed.  Note that in the proof of Lemma \ref{l:estimates_small_interval} in the above setting an extension of Lemma \ref{l:extension_operators} to Bessel potential spaces is needed. The latter can be obtained by interpolation from the same type of lemma for Sobolev spaces $W^{1,q}$ with $q\in [1, \infty]$.
\end{remark}

\begin{remark}\label{rem:Rdcase}
We believe that the results of Theorems \ref{t:parabolic_problems} and \ref{t:parabolic_problems_non} also hold if $\T^d$ is replaced by a compact smooth $d$-dimensional manifold without boundary, and the proof should extend almost verbatim.

Theorems \ref{t:parabolic_problems} and \ref{t:parabolic_problems_non} also hold if $\T^d$ is replaced by $\R^d$ if the coefficients become constant for $|x|\to \infty$. More precisely we require that there exist progressive measurable $\wh{a}^{i,j}:[s,T]\times\Omega\to \R^{m\times m}$ and  $\wh{b}^{j}_{k,n}:[s,T]\times\Omega\to \R$ such that
\begin{align*}
\lim_{|x|\to \infty} \esssup_{\omega\in \Omega} \sup_{t\in [s,T]} \big(|a^{i,j}_{k,h} - \wh{a}^{i,j}_{k,h}| +  \|(b^{j}_{k,n} - \wh{b}^{j}_{k,n})_{n\geq 1}\|_{\ell^2} \big) = 0.
\end{align*}
The latter is needed in Step 3 of Lemma \ref{l:estimates_small_interval} with $\Tor^d$ replaced by $\R^d$.

Conditions near infinity can be avoided if $p=q$ see \cite[Theorem 5.1]{Kry} for the case $m=1$, and for an alternative approach \cite[Theorem 5.4]{VP18} which additionally holds in the case $m\geq 1$.
\end{remark}

\subsection{Proof of Lemma \ref{l:estimates_small_interval}}

Before we prove Lemma \ref{l:estimates_small_interval} we first state a simple extension result.
\begin{lemma}
\label{l:extension_operators}
Let $\alpha\in (0,N]$, $\Dom\in \{\R^d,\Tor^d\}$ and let $X$ be a Banach space. Then, for any $y\in \Dom$ and any $r\in (0,\frac{1}{8})$ there exists an extension operator
\[
\e_{y,r}^{\Dom}:C^{\alpha}(\B_{\Dom}(y,r);X)\to C^{\alpha}(\Dom;X)
\]
which satisfies the following properties:
\begin{enumerate}[{\rm(1)}]
\item\label{it:extension_constant} $\e_{y,r}^{\Dom} f|_{\B_{\Dom}(y,r)}=f$, $\e_{y,r}^{\Dom} c\equiv c$ for any $f\in C^{\alpha}(\B_{\Dom}(y,r);X)$ and $c\in X$;
\item\label{it:extension_bounds_C_alpha} $\|\e_{y,r}^{\Dom}\|_{\calL(C^{\alpha}(\B_{\Dom}(y,r);X),C^{\alpha}(\Dom;X))}\leq C_r$ for some $C_r$ independent of $y$;
\item\label{it:extension_bounds_L_infty} $\|\e_{y,r}^{\Dom}\|_{\calL(C(\overline{\B_{\Dom}(y,r)};X),C(\overline{\Dom};X))}\leq C$ for some $C$ independent of $y,r$.
\end{enumerate}
\end{lemma}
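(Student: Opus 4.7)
The construction reduces to a model extension from the unit ball combined with the standard \emph{subtract--extend--localize--add back} trick, which simultaneously secures the identity $\e_{y,r}^{\Dom}c\equiv c$ of \eqref{it:extension_constant} and the $(y,r)$-uniform supremum bound \eqref{it:extension_bounds_L_infty}.

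First I would fix, once and for all, a bounded linear extension operator $\mathrm{F}:C^{\alpha}(\B(0,1);X)\to C^{\alpha}(\R^d;X)$ with $\mathrm{F}g|_{\B(0,1)}=g$ and
$$
\|\mathrm{F}g\|_{C(\R^d;X)}\leq c_{0}\|g\|_{C(\overline{\B(0,1)};X)},\qquad
\|\mathrm{F}g\|_{C^{\alpha}(\R^d;X)}\leq c_{1}\|g\|_{C^{\alpha}(\overline{\B(0,1)};X)},
$$
where $c_0,c_1$ depend only on $\alpha,d$. Such an $\mathrm{F}$ exists by a Stein-type reflection extension for Lipschitz domains or by Whitney's construction; both realise $\mathrm{F}$ as a linear combination of pointwise evaluations weighted against a smooth partition of unity, which transfers to the $X$-valued setting verbatim. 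I then translate and rescale: for $f\in C^{\alpha}(\B(y,r);X)$ set $g(z):=f(y+rz)$ on $\B(0,1)$ and $\mathrm{F}_{y,r}f(x):=(\mathrm{F}g)((x-y)/r)$ on $\R^d$. Tracking how $C^{\alpha}$ norms transform under scaling yields $\|\mathrm{F}_{y,r}f\|_{C(\R^d;X)}\leq c_0\|f\|_{C(\overline{\B(y,r)};X)}$ (uniform in $r,y$) and $\|\mathrm{F}_{y,r}f\|_{C^{\alpha}(\R^d;X)}\leq c_{\alpha,r}\|f\|_{C^{\alpha}(\overline{\B(y,r)};X)}$ where $c_{\alpha,r}$ blows up polynomially as $r\downarrow 0$ but is independent of $y$.

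Next I would pick a standard cutoff $\chi_r\in C^{\infty}_c(\R^d)$ with $0\leq\chi_r\leq 1$, $\chi_r\equiv 1$ on $\B(y,r)$, $\supp \chi_r\subset \B(y,2r)$, obtained by rescaling and translating a fixed bump, and set $c:=f(y)\in X$. Define
$$
\e_{y,r}^{\Dom}f\;:=\;\chi_r\cdot \mathrm{F}_{y,r}(f-c)+c.
$$
Property \eqref{it:extension_constant} is immediate: on $\B(y,r)$ one has $\chi_r\equiv 1$ and $\mathrm{F}_{y,r}(f-c)=f-c$, giving $\e_{y,r}^{\Dom}f=f$; for a constant $f\equiv c$, linearity of $\mathrm{F}_{y,r}$ forces $\mathrm{F}_{y,r}(0)=0$, so $\e_{y,r}^{\Dom}c\equiv c$. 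Property \eqref{it:extension_bounds_L_infty} follows from
$$
\|\e_{y,r}^{\Dom}f\|_{C(\overline{\Dom};X)}\leq c_{0}\|f-c\|_{C(\overline{\B(y,r)};X)}+\|c\|_X\leq (2c_0+1)\|f\|_{C(\overline{\B(y,r)};X)},
$$
a bound whose constant is independent of both $y$ and $r$---the subtraction of $c=f(y)$ is exactly what decouples the estimate from the scale $r$. Property \eqref{it:extension_bounds_C_alpha} follows from a Leibniz-type product estimate in $C^{\alpha}$ together with $\|\chi_r\|_{C^{\alpha}}\leq C'_r$ and $\|f-c\|_{C^{\alpha}(\overline{\B(y,r)};X)}\leq 2\|f\|_{C^{\alpha}(\overline{\B(y,r)};X)}$, producing some $C_r$ independent of $y$.

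For $\Dom=\Tor^d$ the constraint $r<1/8$ ensures that $\B_{\Tor^d}(y,2r)$ lies inside a fundamental domain, so the whole construction can be carried out in a local $\R^d$-chart centred at $y$; since $\chi_r\cdot\mathrm{F}_{y,r}(f-c)$ has support in this chart, the output descends canonically to $\Tor^d$, equalling the constant $c$ outside $\B_{\Tor^d}(y,2r)$ and therefore belonging to $C^{\alpha}(\Tor^d;X)$. The only mildly subtle point in the whole argument is engineering the supremum bound to be $r$-uniform, and this is settled at essentially no cost by the $c=f(y)$ subtraction.
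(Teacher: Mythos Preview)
Your argument is correct and follows essentially the same route as the paper: a standard extension from the unit ball, rescaled and translated, then blended with the constant $f(y)$ via a compactly supported cutoff to secure both the constant-preservation in \eqref{it:extension_constant} and the $(y,r)$-uniform $L^\infty$ bound \eqref{it:extension_bounds_L_infty}; the torus case is handled by working in a local chart, which the paper phrases via the $\mathrm{mod}\,1$ map. The only cosmetic difference is that the paper assumes its base extension already preserves constants and writes the formula as $\varphi\cdot\e f+(1-\varphi)f(0)$, whereas you subtract $c=f(y)$ explicitly before extending---these two formulations are algebraically equivalent.
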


\begin{proof}
\textit{Step 1: The case $\Dom=\R^d$}. Note that by localization and the well-known extension operator \cite[Example 1.9 and discussion below it]{InterpolationLunardi}, one can check that there exists a bounded linear operator $\e:C^{\alpha}(\B_{\R^d}(1));E\to C^{\alpha}(\R^d;E)$ such that $\|\e f\|_{L^{\infty}(\R^d;E)}\leq C\|f\|_{L^{\infty}(\B(1);E)}$, $\e c=c$ for any $c\in\R$, and $\e f|_{\B_{\R^d}(1)}=f$ for any $f\in C^{\alpha}(\B_{\R^d}(1))$. Let $\varphi\in C^{\infty}_c(\R^d)$ be radially symmetric and such that $0\leq \varphi\leq 1$, $\varphi|_{\B_{\R^d}(1)}=1$ and $\varphi|_{\R^d\setminus \B_{\R^d}(2)}=0$. For any $f\in C(\overline{\B_{\R^d}}(1);X)$, set
$$(\e_{0,1}^{\R^d}f)(x):=\varphi(x)\e f (x)+(1-\varphi(x))f(0),\qquad x\in \R^d.$$
Then $\e_{0,1}^{\R^d}f=f(0)$ on $\R^d\setminus \B_{\R^d}(2)$ is constant. Setting
$$
\e_{y,r}^{\R^d} f(x)=\e_{0,1}^{\R^d}[f(y+r\cdot)]\Big(\frac{x-y}{r}\Big),\qquad x\in \R^d,
$$
one can check that $\e_{y,r}^{\R^d}$ has the desired properties.

\textit{Step 2: The case $\Dom=\Tor^d$.} We identity the torus with $[-1/2,1/2]^d$. Fix $y\in \Tor^d$ and set
\begin{equation*}
\e_{y,r}^{\Tor^d}(f)(x):= \e_{0,r}^{\R^d}(f(\cdot+y))((x-y) \, \mathrm{mod} 1), \ \ x\in \Tor^d,
\end{equation*}
where we set $(z \, \mathrm{mod} 1)_j = z_j \, \mathrm{mod} 1\in (-1/2,1/2]$ for $z\in \R^d$.
It remains to check that $\e^{\Tor^d}_{y,r}$ satisfies the required properties. Note that $\text{dist}_{\T^d}(x,y) = |(x-y)\, \mathrm{mod} 1|$, and the function $x\mapsto (x-y)\mathrm{mod} 1$ is smooth on the torus. Therefore, $\e_{y,r}^{\Tor^d}$ inherits the required properties from $\e^{\R^d}_{y,r}$.
For instance, if $\text{dist}_{\T^d}(x,y)>2r$, then $\e_{y,r}^{\Tor^d}(f)(x) = f(y)$.
\end{proof}

Next we turn to the proof of Lemma \ref{l:estimates_small_interval}.
\begin{proof}[Proof of Lemma \ref{l:estimates_small_interval}]
We will only prove the divergence form case, since the non-divergence form case is completely analogues. For notational convenience we set $s=0$. Let $\tT>0$ be arbitrary but fixed. It will be chosen below in Step 5.

Fix $t\in [0,T]$, $\ell\in \{0,\a\}$, a stopping time $\tau:\O\to [t,T\wedge (t+\tT)]$, $f\in L^p_{\Progress}((t,\tau)\times\Omega,w_{\ell}^t;X_0)$ and $g\in L^p_{\Progress}((t,\tau)\times\Omega,w_{\ell}^t;\g(\ell^2,X_{1/2}))$ and we set
\begin{equation}
\label{eq:N_f_g_stokes}
N_{f,g}(t,\tau):=\|f\|_{L^p((t,\tau)\times\Omega,w_{\ell}^t;X_0)}+\|g\|_{L^p((t,\tau)\times\Omega,w_{\ell}^t;\g(H,X_{1/2}))}.
\end{equation}
Moreover, for any $y\in\Tor^d$, $r\in (0,\frac{1}{8})$, set $\B(y,r):=\B_{\Tor^d}(y,r)$ and for $v\in H^{2-\reg,q}$
\begin{equation*}
\begin{aligned}
\A_{y}(t)v &:=-\sum_{i,j=1}^d \partial_i (\am^{i,j}(t,y)\partial_j v),
\ &  (\b_{n,y}(t)v)&:=\Big(\sum_{j=1}^d \bm^{j}_{k,n}(t,y)\partial_jv_{k}\Big)_{k=1}^m,\\
\A_{y,r}^{\e}(t)v&:=-\sum_{i,j=1}^d \partial_i (\e_{y,r}^{\Tor^d}(\am^{i,j}(t,\cdot))\partial_j v,
\ &  \b_{n,y,r}^{\e}(t)v&:=\Big(\sum_{j=1}^d(\e_{y,r}^{\Tor^d}(\bm^{j}_{k,n}(t,\cdot)) \partial_j) v_k\Big)_{k=1}^m,
\end{aligned}
\end{equation*}
where $\e_{y,r}^{\Tor^d}$ is the extension operator of Lemma \ref{l:extension_operators}. Comparing the previous definitions with \eqref{eq:def_AB_parabolic}, one sees that $\A_{y},\b_{y}$ are the operators with ``frozen coefficient at $y\in \Tor^d$" and $\A_{y,r}^{\e}$, $\b_{y,r}^{\e}$ are the operators whose coefficients are the extensions of $a^{i,j}|_{\B(y,r)} $, $\btwod^j_n|_{\B(y,r)}$.

To abbreviate the dependencies let
$\Set :=\{q,p,\a,d,\reg,\vartheta,\alpha,C_{a,\btwod}\}$
and we write $C(\Set)$ instead of $C$ below.

\textit{Step 1: There exists $C_1(\Set)>0$ such that for each $t\in [0,T)$, and $y\in \Tor^d$, one has $(\A_{y},\b_{y})\in \mathcal{SMR}_{p,\ell}^\bullet(t,T)$ and}
\begin{equation*}
\max\{C^{\deter,0,p,\ell}_{(\A_{y},\b_{y})}(t,T),C^{\stoc,0,p,\ell}_{(\A_{y},\b_{y})}(t,T)\}\leq C_1.
\end{equation*}
Since the coefficients of $\A_{y},\b_{y}$ are $x$-independent, this follows from the periodic variant of \cite[Theorem 5.3 and Remark 4.6]{VP18}. Moreover, inspecting the proofs of these results one can check that the constants can be taken independent of $t$.

\textit{Step 2: There exists $\eta(\Set)>0$ for which the following holds: }

\textit{If $y\in \Tor^d$ and $r\in (0,\frac{1}{8})$ satisfy}
\begin{equation*}
\|a^{i,j}_{k,h}(t,\cdot)-a^{i,j}_{k,h}(t,y)\|_{L^{\infty}(\B(y,r))}
+\|(b^j_{k,n}(t,\cdot)-b^j_{k,n} (t,y))_{n\geq 1}\|_{L^{\infty}(\B(y,r);\ell^2)}
\leq \eta
\end{equation*}
\textit{a.s.\ for all $t\in [0,T]$, $i,j\in \{1,\dots,d\}$, then $(\A_{y,r}^{\e},\b_{y,r}^{\e})\in\mathcal{SMR}_{p,\ell}^\bullet(t,T)$ and}
\begin{equation*}
\max\{C^{\deter,0,p,\ell}_{(\A_{y,r}^{\e},\b_{y,r}^{\e})}(t,T),C^{\stoc,0,p,\ell}_{(\A_{y,r}^{\e},\b_{y,r}^{\e})}(t,T)\}\leq C_2(\Set).
\end{equation*}
The idea is to apply Theorem \ref{t:pertubation}. To this end, we write
\begin{equation*}
\A_{y,r}^{\e}= \A_{y} +(\A_{y,r}^{\e}-\A_{y}),\qquad
\b_{y,r}^{\e}= \b_{y} +(\b_{y,r}^{\e}-\b_{y}).
\end{equation*}
Let $\varepsilon>0$ be as in Proposition \ref{prop:multiplication_negative}. Then for each $u\in H^{2+\sigma,q}$,
\begin{align*}
\|(\A_{y,r}^{\e}-\A_{y}) u\|_{H^{\sigma,q}}\leq \sum_{i,j=1}^d \sum_{k,h=1}^m \Big\|(a^{i,j}_{k,h}(t,y)-\e_{y,r}^{\Tor^d}(a^{i,j}_{k,h}(t,\cdot)))\partial_j u_h\Big\|_{H^{1+\sigma,q}}.
\end{align*}
We estimate each of the latter terms separately:
\begin{equation*}
\begin{aligned}
\Big\|&(a^{i,j}(t,y) -\e_{y,r}^{\Tor^d}(a^{i,j}(t,\cdot)))\partial_j u\Big\|_{H^{1+\sigma,q}}
\\ &
\stackrel{(i)}{=}
\Big\|\e_{y,r}^{\Tor^d}\big(a^{i,j}(t,y)-a^{i,j}(t,\cdot)\big)\partial_j u\Big\|_{H^{1+\sigma,q}}\\
&
\stackrel{(ii)}{\lesssim}
\Big\|\e_{y,r}^{\Tor^d}\big(a^{i,j}(t,y)-a^{i,j}(t,\cdot)\big)\Big\|_{L^{\infty}}
\|\partial_j u\|_{H^{1+\sigma,q}}\\
& \qquad
+\Big\|\e_{y,r}^{\Tor^d}\big(a^{i,j}(t,y)-a^{i,j}(t,\cdot)\big)\Big\|_{C^{\alpha}}
\|\partial_j u\|_{H^{1+\sigma-\varepsilon,q}}\\
&
\stackrel{(iii)}{\leq}
\eta \|u\|_{H^{2+\sigma,q}}+C_r C_{a,b}\|u\|_{H^{2+\sigma-\varepsilon, q}},
\end{aligned}
\end{equation*}
where in $(i)$, $(iii)$ we used Lemma \ref{l:extension_operators} and in $(ii)$ we used Proposition \ref{prop:multiplication_negative}\eqref{it:multiplication_negative2} and \eqref{it:multiplication_negative4}.
It remains to observe that by interpolation inequalities and Young's inequality
\begin{align*}
\|u\|_{H^{2+\sigma-\varepsilon, q}}\lesssim \|u\|_{H^{2+\sigma, q}}^{1-\frac{\varepsilon}{2}} \|u\|_{H^{\sigma, q}}^{\frac{\varepsilon}{2}}
\leq \eta \|u\|_{H^{2+\sigma, q}} +  C_{\eta,\varepsilon} \|u\|_{H^{\sigma, q}}.
\end{align*}
Similarly, by \cite[Proposition 9.3.1]{Analysis2}
\begin{align*}
\|(\b_{y,r}^{\e}-\b_{y}) u\|_{\gamma(\ell^2,H^{1+\sigma,q})} \lesssim \sum_{j=1}^d \sum_{k=1}^m \Big\|\Big((b^{j}_{k,n}(t,y)-\e_{y,r}^{\Tor^d}(b^{j}_{k,n}(t,\cdot)))\partial_j u_k\Big)_{n\geq 1}\Big\|_{H^{1+\sigma,q}(\ell^2)},
\end{align*}
where $H^{1+\sigma,q} (\ell^2)= H^{1+\sigma,q}(\T^d;\ell^2)$. Each of the terms can be estimated as before, by using Proposition \ref{prop:multiplication_negative}\eqref{it:multiplication_negative2} and \eqref{it:multiplication_negative4} again.

Now the claim of Step 2 follows from Step 1, Theorem \ref{t:pertubation} and the arbitrariness of $\ell\in \{0,\kappa\}$.

\textit{Step 3: Let $\eta$ be as in Step 2. There exist an integer $\Lambda\geq 1$, $(y_\lambda)_{\lambda=1}^\Lambda\subseteq \Tor^d$, $(r_\lambda)_{\lambda=1}^\Lambda\subseteq (0,\frac{1}{8})$, depending only on the quantities in $\Set$, such that $\Tor^d \subseteq \cup_{\lambda=1}^\Lambda \B_\lambda$, where $\B_\lambda:=\B_{\Tor^d}(y_\lambda,r_\lambda)$, and a.s.\ for all $t\in[0,T]$, $i,j\in \{1,\dots,d\}$,}
$$
\|a_{k,h}^{i,j}(t,y_\lambda)-a_{k,h}^{i,j}(t,\cdot)\|_{L^{\infty}(\B_\lambda)}
+
\|(b_{k,n}^j(t,y_\lambda)-b_{k,n}^{j}(t,\cdot))_{n\geq 1}\|_{L^{\infty}(\B_\lambda;\ell^2)}\leq \eta.
$$
\textit{In particular, for all $\lambda\in \{1,\dots,\Lambda\}$ and all $t\in [0,T)$}
\begin{equation*}
(\A_\lambda^{\e},\b_\lambda^{\e}):=(\A^{\e}_{x_\lambda,r_\lambda},\b^{\e}_{x_\lambda,r_\lambda})\in \mathcal{SMR}_{p,\ell}^{\bullet}(t,T),
\end{equation*}
and $\max\{C^{\deter,0,p,\ell}_{(\A_\lambda^{\e},\b_\lambda^{\e})}(t,T),C^{\stoc,0,p,\ell}_{(\A_\lambda^{\e},\b_\lambda^{\e})}(t,T)\}\leq C_3(\Set).
$

The last claim follows from the first one and Step 2. To prove the first claim, fix $y\in \Tor^d$. Note that
\begin{align*}
\|a_{k,h}^{i,j}(t,y)-& a_{k,h}^{i,j}(t,\cdot)\|_{L^{\infty}(\B(y,r))}
+
\|(b_{k,n}^j(t,y)-b_{k,n}^{j}(t,\cdot))_{n\geq 1}\|_{L^{\infty}(\B(y,r);\ell^2)}\\
&
\leq \big([a^{i,j}_{k,h}(t,\cdot)]_{C^{\alpha}(\B(y,r))}+[(b_{k,n}^j(t,\cdot))_{n\geq 1}]_{C^{\alpha}(\B(y,r);\ell^2)}\big) r^{\alpha}\\
&\leq C_{\sigma,q}C_{a,b} \,r^{\alpha}\leq \eta,
\end{align*}
where the last inequality follows by choosing $r:=\min \big\{\big(\frac{\eta}{C_{\sigma,q}C_{a,b }}\big)^{1/\alpha},\frac{1}{8}\big\}$, which only depends on $\Set$. Since $\Tor^d$ can be covered by finitely many balls of the form $B(y,r)$ with $y\in \T^d$, the claim of Step 3 follows.

\emph{Step 4: A representation formula for $u$}. Let $(\phi_\lambda)_{\lambda=1}^{\Lambda}$ be a partition of unity subordinated to the covering $(B_\lambda)_{\lambda=1}^\Lambda$. Multiplying \eqref{eq:parabolic_problem} by $\phi_\lambda$, one obtains
\begin{equation}
\label{eq:parabolic_localized}
\begin{cases}
\dd u_\lambda +\A\, u_\lambda \, \dd t=([\A,\phi_\lambda] u +f_\lambda) \, \dd t \\
\qquad \qquad \qquad\quad + \sum_{n\geq 1} (\b_n u_\lambda+[\phi_\lambda,\b_n]u+g_{n,\lambda}) \, \dd w^n_t,&\text{ on }\Tor^d,\\
u_\lambda(t)=0, &\text{ on }\Tor^d,
\end{cases}
\end{equation}
where $u_\lambda:=\phi_\lambda u$, $f_\lambda:=f \phi_\lambda$, $g_{n,\lambda}:=\phi_\lambda g_n$ and $[\cdot,\cdot]$ denotes the commutator. Since $\supp(u_\lambda)\subseteq \B_\lambda$, one has $\A u_\lambda=\A_{\lambda}^{\e} u_\lambda$, $\b_{n}u_\lambda=\b_{n,\lambda}^{\e} u_\lambda$. By \eqref{eq:parabolic_localized}, Proposition \ref{prop:causality}\eqref{it:causality}, and $(\A_\lambda^{\e},\b_\lambda^{\e})\in \mathcal{SMR}_{p,\ell}^{\bullet}(t,T)$ (see Step 3), one gets
$$
u_\lambda=\Sol_{\lambda}(0,[\A,\phi_\lambda] u, [\phi_\lambda,\b_n]u)+ \Sol_\lambda (0,f_\lambda,(g_{n,\lambda})_{n\geq 1}),\quad \text{ a.e.\ on } [0,\tau]\times \Omega,
$$
where $\Sol_{\lambda}:=\Sol_{0,(\A_\lambda^{\e},\b_\lambda^{\e})}$ is the solution operator associated to the couple $(\A_\lambda^{\e},\b_\lambda^{\e})$, see \eqref{eq:solution_operator_definition}. Therefore,
a.e.\ on $[t,\tau]\times \Omega$,
\begin{equation}
\label{eq:localized_u_parabolic}
\begin{aligned}
u &=\sum_{\lambda=1}^{\Lambda} \Sol_{\lambda}(0,[\A,\phi_\lambda] u, ([\phi_\lambda,\b_n]u)_{n\geq 1})+ \sum_{\lambda=1}^{\Lambda} \Sol_\lambda (0,f_\lambda,(g_{n,\lambda})_{n\geq 1}).
\end{aligned}
\end{equation}

\emph{Step 5: Conclusion}. A straightforward calculation shows that $\|[\phi_\lambda,\A]u\|_{X_0}$ can be estimated by a linear combination of terms of the form $\|a^{ij}_{k,h} (\partial_j \phi_\lambda) u_h\|_{H^{1+\sigma,q}}$ and $\|(\partial_i \phi_\lambda) a^{ij}_{k,h} \partial_j u_h\|_{H^{\sigma,q}}$.
Since $a^{ij}_{k,h}\in C^{\alpha}$ with $\alpha>|1+\sigma|$, Proposition \ref{prop:multiplication_negative}\eqref{it:multiplication_negative2} and \eqref{it:multiplication_negative4}
imply
\begin{align*}
\|a^{ij}_{k,h} (\partial_j \phi_\lambda) u_h\|_{H^{1+\sigma,q}}\lesssim \|u_h\|_{H^{1+\sigma,q}} \lesssim \|u\|_{X_{1/2}}
\end{align*}
For the second term in case $\sigma\leq -1$ choose $\varepsilon\in (0,1]$ such that $\alpha>-\sigma-1+\varepsilon$. Then
\begin{align*}
\|(\partial_i \phi_\lambda) a^{ij}_{k,h} \partial_j u_h\|_{H^{\sigma,q}}\lesssim
\|(\partial_i \phi_\lambda) a^{ij}_{k,h} \partial_j u_h\|_{H^{\sigma+1-\varepsilon,q}} \lesssim \|\partial_j u_h\|_{H^{\sigma+1-\varepsilon,q}} \lesssim \|u\|_{X_{1-\varepsilon/2}}.
\end{align*}
The same estimate holds in case $\sigma>-1$, using $\varepsilon\in (0,\min\{1+\sigma,1\})$.
Similarly since $b^{j}_{k}\in C^{\alpha}(\T^d;\ell^2)$,
\[\|([\phi_\lambda,\b_n] u)_{n\geq 1}\|_{\gamma(\ell^2,X_{1/2})}\lesssim \sum_{j=1}^d\sum_{k=1}^m \|(b_{k,n} (\partial_j\phi_\lambda) u_k)_{n\geq 1}\|_{H^{1+\sigma,q}(\ell^2)}\lesssim \|u\|_{X_{1/2}}.\]

From \eqref{eq:localized_u_parabolic} and Step 3 it follows that
\begin{align*}
\|u\|_{L^p((t,\tau)\times\Omega,w_{\ell}^t;X_1)} \lesssim_{\Set} T_1+T_2 + N_{f,g}(t,\tau),
\end{align*}
where $N_{f,g}(t,\tau)$ is as in \eqref{eq:N_f_g_stokes}, and
\begin{align*}
T_1 & :=\max_{\lambda\in \{1,\dots,\Lambda\}}
 \|[\A,\phi_\lambda]u\|_{L^p((t,\tau)\times\Omega,w_{\ell}^t;X_0)} \lesssim_{\Set} \|u\|_{L^p((t,\tau)\times\Omega,w_{\ell}^t;X_{1-\varepsilon/2})},
\\  T_2 &:=\max_{\lambda\in \{1,\dots,\Lambda\}}
 \|([\phi_\lambda,\b_n] u)_{n\geq 1}\|_{L^p((t,\tau)\times\Omega,w_{\ell}^t;\gamma(\ell^2,X_{1/2}))} \lesssim_{\Set} \|u\|_{L^p((t,\tau)\times\Omega,w_{\ell}^t;X_{1/2})},
\end{align*}
for some $\varepsilon(q,\sigma,d)\in (0,1]$. Thus by standard interpolation inequalities (see \cite[(C.1)]{Analysis1}), Holder's and Young's inequality there exists a constant $C_5(\Set)>0$ such that
\begin{equation}
\label{eq:estimate_with_lower_order_terms}
\|u\|_{L^p((t,\tau)\times\Omega,w_{\ell}^t;X_1)}\leq \frac{1}{4} \|u\|_{L^p((t,\tau)\times\Omega,w_{\ell}^t;X_1)}+C_5 \Big(\|u\|_{L^p((t,\tau)\times\Omega,w_{\ell}^t;X_0)}+  N_{f,g}(t,\tau)\Big),
\end{equation}
To obtain the desired estimate we will use that $\tau\leq (t+\tT)\wedge T$ and choose $\tT$ small enough. Indeed, due to the first statement in \cite[Lemma 3.13]{AV19_QSEE_1}, there exists constants $(c_{s}(\Set))_{s>0}$ independent of $t$ such that $\lim_{s\downarrow 0}c_s=0$ and
\begin{equation}
\label{eq:estimate_to_control_lower_order_terms}
 \|u\|_{L^p((t,\tau)\times\Omega,w_{\ell}^t;X_0)}\leq c_{\tT} (\|u\|_{L^p((t,\tau)\times\Omega,w_{\ell}^t;X_1)}+ N_{f,g}(t,\tau)).
\end{equation}
By choosing $\tT$ so small that $c_{\tT}\leq \frac{1}{4C_5}$ combining \eqref{eq:estimate_with_lower_order_terms} and \eqref{eq:estimate_to_control_lower_order_terms} it follows that $\|u\|_{L^p((t,\tau)\times\Omega,w_{\ell}^t;X_1)}\leq (2C_5+\frac12) N_{f,g}(t,\tau)$. This completes the proof.
\end{proof}


\begin{acks}[Acknowledgments]
The authors thank the anonymous referees and Max Sauerbrey for careful reading and helpful suggestions.
\end{acks}

\begin{funding}
The first author has been partially supported by the Nachwuchsring -- Network for the promotion of young scientists -- at TU Kaiserslautern.
The second author is supported by the VIDI subsidy 639.032.427 of the Netherlands Organisation for Scientific Research (NWO)
\end{funding}


\bibliographystyle{imsart-number}
\bibliography{literature}

\end{document}